\documentclass[english,11pt]{amsart}
\usepackage{amsopn,amsthm,amsfonts,amsmath,amssymb,amscd}
\usepackage{babel}
\usepackage{nicematrix}
\usepackage{mathtools}
\usepackage{tikz}

\textwidth 15cm
\textheight 21.5cm
\topmargin -1cm
\oddsidemargin 5pt
\evensidemargin 5pt
\pagestyle{plain}

\newtheorem{Theorem}{Theorem}[section]
\newtheorem{Lemma}[Theorem]{Lemma}

\newtheorem*{Remark}{Remark}

\newenvironment{Proof*}{{\it Proof.}}{\hfill $\blacksquare$}

\newcommand{\NN}{\mathbb{N}}

\newcommand{\RR}{\mathbb{R}}
\newcommand{\ZZ}{\mathbb{Z}}

\newcommand{\ad}{{\rm ad}}

\newcommand{\Tr}{\mathrm{tr}}

\newcommand{\OO}{\mathcal{O}}

\newcommand{\im}{{\rm im}}

\begin{document}

\title{Unipotent orbits of elements in a quaternion ring of odd order}

\author{David Dol\v{z}an}
\address{Department of Mathematics, Faculty of Mathematics and Physics, University of Ljubljana, Jadranska 19, SI-1000 Ljubljana, Slovenia; and Institute of Mathematics, Physics and Mechanics, Jadranska 19, SI-1000 Ljubljana, Slovenia}
\email{david.dolzan@fmf.uni-lj.si}

\subjclass[2020]{16P10, 16H05, 16N40, 11R52} 
\keywords{Quaternion rings, local principal rings, group action, unipotent elements}
\thanks{The author acknowledges the financial support from the Slovenian Research Agency  (research core funding No. P1-0222)}

\begin{abstract} 
  Let $n \in \NN$ and let $q=p^r$ be an odd prime power. Let $R$ be a finite commutative local principal ring of cardinality $q^{n}$ with $R/J(R) \simeq GF(q)$. We study the conjugation action of the group of all unipotent elements in the quaternion ring $H(R)$ on $H(R)$ and we classify the resulting
unipotent similarity classes, using a reduction to the ring of $2$-by-$2$ matrices over $R$.
\end{abstract}

\maketitle 

 \section{Introduction}

\bigskip

Let $R$ be a commutative ring with identity. The set
\begin{equation*}
H(R)= \{r_1+ r_2i + r_3j + r_4k : r_i \in R\}=R \oplus Ri \oplus Rj \oplus Rk,
\end{equation*}
together with the relations $ i^2 = j^2 = k^2 = ijk = -1$, and $ij = -ji$ turns out to be a ring, called the $\it{quaternion}$ $\it{ring}$ over $R$. This is obviously a generalization of Hamilton's division ring of real quaternions, $H(\RR)$. Recently, many studies have been devoted to quaternion rings and their properties. Among others, the authors in \cite{aris1, aris2} studied the ring $H(\ZZ_p)$, for a prime number $p$, while in \cite{mig2,mig1} the structure of $H(\ZZ_n)$ was studied.
In \cite{ghara}, the authors investigated the structure of superderivations of quaternion rings over $\ZZ_2$-graded rings, while in \cite{ghara2} the authors studied the mappings on quaternion rings.
In \cite{cher22}, the structure of the ring $H(R)$ was described for a ring $R$, where $2$ is an invertible element, the authors in \cite{xie} studied systems of matrix equations over $H(R)$, and in \cite{cherdol}, the representations of elements of a quaternion ring as sums of exceptional units were investigated.

In \cite{caluga}, the author has studied a problem of decomposing elements as a product of two idempotents in the ring of $2$-by-$2$ matrices over a general domain, and later in \cite{caluga1} the author also examined the question of which $2$-by-$2$ idempotent matrices are products of two nilpotent matrices. Furthermore, the characterization of singular $2$-by-$2$ matrices over a commutative domain which are products of two idempotents or products of two nilpotents was investigated in \cite{caluga2}.
In \cite{dolzanid, dolzannil} the author investigated the product of idempotents and the product of nilpotents in quaternion rings.

The action of the group of unipotent matrices on the set of $2$-by-$2$ matrices over commutative domains was studied recently in \cite{calugauni1, calugauni2}.

In this paper, we investigate the conjugation action on $H(R)$ induced by the subgroup of unipotent elements in $H(R)$. 
More precisely, for a ring $S$ we let
\[
\mathcal{U}(S)=\{\,1+n \in S; n\in S\text{ is nilpotent}\},
\]
and we consider the action of $\mathcal{U}(H(R))$ on $H(R)$ given by conjugation,
\[
(b,a)\longmapsto bab^{-1}\qquad (b\in \mathcal{U}(H(R)),\ a\in H(R)).
\]
We call two elements $a,b\in H(R)$ \emph{unipotent similar} if they lie in the same orbit for this action.

The motivation for studying unipotent similarity is twofold. First, conjugation by unipotent
elements is a natural and tractable substitute for full unit conjugation in noncommutative
finite rings: it retains enough structure to produce a meaningful stratification of the ring
into similarity classes, while being sufficiently explicit to allow orbit-size calculations.
Second, recent work on unipotent similarity of matrices over commutative domains
(e.g. \cite{calugauni1,calugauni2}) suggests that the resulting orbit structure is governed by
a small collection of invariants and admits uniform counting formulas; the quaternion setting
provides a noncommutative test-bed where these phenomena persist in a controlled way.

Throughout the paper we assume that $R$ is a finite commutative local principal ring of odd
order $|R|=q^{n}$ with residue field $R/J(R)\cong GF(q)$. The hypothesis that $q$ is odd
ensures that $2\in R^*$, which is used repeatedly in the structural decomposition of
unipotent elements and in orbit computations. Moreover, in odd characteristic the quaternion
ring $H(R)$ admits a concrete $2$-by-$2$ matrix realization; consequently, orbit problems in
$H(R)$ can be transferred to orbit problems in $M_{2}(R)$ under unipotent conjugation.
This reduction allows us to combine matrix techniques with the ideal-theoretic filtration by
powers of the Jacobson radical.

Our main result gives a complete description of the unipotent orbit decomposition: we determine
all possible orbit sizes and the exact number of orbits of each size. The key inputs are:
(i) a factorization result showing that every unipotent element in $GL_{2}(R)$ is a product of
elementary unipotent matrices up to central factors; (ii) an explicit orbit analysis over the
residue field $GF(q)$ in terms of the discriminant of the characteristic polynomial; and
(iii) a lifting principle showing that, under a mild rank condition on $\im(\ad(\overline A))$,
orbit sizes grow uniformly when passing from $R/J(R)$ to $R/J(R)^{k}$.

The structure of the paper is as follows. In Section~2 we collect definitions and preliminary
results on finite local principal rings and unipotent elements. In Section~3 we determine the
unipotent orbits in $M_{2}(R)$ (and hence in $H(R)$) and prove the main counting theorem.

\bigskip

 \section{Definitions and preliminaries}
\bigskip

All rings in our paper will be finite rings with identity. 
For a ring $R$, $Z(R)$ will denote its centre and $N(R)$ will denote the set of all its nilpotents. The group of units in $R$ will be denoted by $R^*$ and the Jacobson radical of $R$ by $J(R)$. We shall often denote the mapping of an element $x \in R$ under the canonical projection $R \rightarrow R/J$ by $\overline{x}$.

We will denote the $2$-by-$2$ matrix ring with entries in a subring $S$ of the ring $R$ by $M_2(S)$, while the group of invertible matrices therein will be denoted by $GL_2(S)$.


For any $a, b \in R$, let $M(a,b)=\left(\begin{array}{cc}
a & b\\
0 & 0
\end{array}\right) \in M_2(R)$. The trace of a matrix $A \in M_2(R)$ will be denoted by $\Tr(A)$. 

For $A \in M_2(R)$, we shall use the mapping $\ad(A): M_2(R) \rightarrow M_2(R)$, defined by $(\ad(A))(Y)=YA-AY$. By the above convention, $\ad(\overline{A})$ will then denote the obvious mapping from $M_2(R/J(R))$ to $M_2(R/J(R))$. 

A subgroup $G$ of $GL_2(R)$ acts on $M_2(R)$ by conjugation. We will denote the orbit of an element $A \in M_2(R)$ for this action by $\OO_G(A)$.
Also, the centralizer of $A \in M_2(R)$ in $T \subseteq M_2(R)$ will be denoted by $C_T(A)$.

It follows from \cite[Theorem 2]{ragha} that every finite local ring has cardinality $p^{nr}$ for some prime number $p$ and some integers $n, r$. Furthermore, the Jacobson radical $J(R)$ is of cardinality $p^{(n-1)r}$ and the factor ring $R/J(R)$ is a field with $p^r$ elements (denoted $GF(p^r)$).

We also have the following lemma, which can be found in \cite{dolzanid}.

\begin{Lemma} \cite[Lemma 2.2]{dolzanid}
\label{pid}    
Let $R$ be a finite local principal ring of cardinality $q^{n}$, where $q=p^r$ for some prime number $p$ and integers $n, r$ with $R/J(R) \simeq GF(q)$. Then there exists $x \in J(R)$ such that $J(R)^k=(x^k)$ for every $k \in \{0,1,\ldots,n\}$. In particular, $|J(R)^k|=q^{n-k}$ for every $k \in \{0,1,\ldots,n\}$.
\end{Lemma}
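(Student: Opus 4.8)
The plan is to exploit the fact that a finite commutative local principal ring is a chain ring, so that all of its ideals are powers of the maximal ideal and each successive quotient is one-dimensional over the residue field. First I would observe that, since $R$ is a principal ideal ring, its Jacobson radical $J=J(R)$ is principal; choose $x\in J$ with $J=(x)$. In a commutative ring one has $(x)^k=(x^k)$, so $J^k=(x^k)$ for every $k\ge 0$, which already establishes the first assertion once this $x$ is fixed.

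Next I would pin down the nilpotency index. Since $R$ is finite, it is Artinian, so $J$ is nilpotent; let $m$ be minimal with $J^m=0$. I would show the chain $R=J^0\supsetneq J\supsetneq\cdots\supsetneq J^m=0$ is strictly decreasing up to level $m$: if $J^k=J^{k+1}$ for some $k<m$, then $J^k=J\cdot J^k$, and Nakayama's lemma (applicable because $J^k$ is a finitely generated $R$-module and $J$ is the Jacobson radical) forces $J^k=0$, contradicting the minimality of $m$.

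The technical heart is to compute the order of each quotient $J^k/J^{k+1}$. Since $J\cdot(J^k/J^{k+1})=0$, this quotient is a module over $R/J\cong GF(q)$, hence a $GF(q)$-vector space; and it is cyclic, generated by the image of $x^k$, because $J^k=(x^k)$. Thus $\dim_{GF(q)}(J^k/J^{k+1})\le 1$, and for $k<m$ the strictness just established makes the quotient nonzero, so the dimension is exactly $1$ and $|J^k/J^{k+1}|=q$.

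It remains to count. Telescoping the radical filtration gives $|R|=\prod_{k=0}^{m-1}|J^k/J^{k+1}|=q^m$; comparing with the hypothesis $|R|=q^n$ yields $m=n$. Running the same telescoping from an arbitrary level $k$ gives $|J^k|=\prod_{i=k}^{n-1}|J^i/J^{i+1}|=q^{n-k}$ for all $0\le k\le n$ (with $J^n=0$ of order $1$), which is the stated cardinality. The only step requiring genuine care is the dimension count for the quotients, where cyclicity (from principal generation) and non-vanishing (from Nakayama) must be combined; the remainder is bookkeeping with the multiplicativity of orders along the filtration.
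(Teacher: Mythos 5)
Your proof is correct and complete. Note that the paper does not actually prove this lemma — it imports it from \cite{dolzanid} — so there is no in-text argument to compare against; but your route (principal generation of $J$, strict decrease of the radical filtration via Nakayama, one-dimensionality of each quotient $J^k/J^{k+1}$ over $R/J\cong GF(q)$ from cyclicity, then telescoping orders to force the nilpotency index to equal $n$) is the standard argument for this fact, and every step is justified under the paper's standing assumption that $R$ is commutative.
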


The following lemma will be used frequently throughout the paper. The proof is straightforward.

\begin{Lemma}
\label{obvious}
   Let $R$ be a ring. Then $R^*+J(R) \subseteq R^*$. 
\end{Lemma}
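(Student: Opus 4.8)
The plan is to reduce the statement to the single case $u=1$ and then to exploit the radical's behaviour under the ring multiplication. First I would observe that the asserted inclusion is equivalent to showing that $1+m\in R^*$ for every $m\in J(R)$. Indeed, take $u\in R^*$ and $n\in J(R)$; since $J(R)$ is a two-sided ideal we have $u^{-1}n\in J(R)$, and
\[
u+n=u\bigl(1+u^{-1}n\bigr).
\]
As $u$ is already a unit, the product $u+n$ is a unit precisely when $1+u^{-1}n$ is, so it suffices to treat elements of the form $1+m$ with $m\in J(R)$.

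For the reduced claim I would use the fact that, because every ring here is finite and hence Artinian, the radical $J(R)$ is nilpotent; this is also transparent in the local principal setting of Lemma~\ref{pid}, where $|J(R)^{n}|=q^{0}=1$ forces $J(R)^{n}=0$. Thus $m^{t}=0$ for some $t$, and the terminating series
\[
s=\sum_{i=0}^{t-1}(-m)^{i}
\]
satisfies $(1+m)\,s=s\,(1+m)=1-(-m)^{t}=1$, exhibiting $1+m$ explicitly as a unit with two-sided inverse $s$. This finishes the proof, and with $t=n$ the same $s$ works uniformly across the rings of interest.

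The argument is genuinely straightforward and I do not expect any serious obstacle. The only point that would require a little care is purely notional: if one instead argued via the general quasi-regularity characterization of $J(R)$ (for each $m\in J(R)$ and each $a\in R$ the element $1-am$ is left invertible, applied with $a=-1$), then in the noncommutative ring $R$ one must still promote the resulting one-sided inverse to a genuine two-sided inverse, checking that the left inverse again differs from $1$ by an element of $J(R)$. The nilpotency route avoids this subtlety entirely, since the explicit inverse $s$ is manifestly two-sided, which is why I would record that argument.
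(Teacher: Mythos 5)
Your proof is correct. The paper does not actually supply an argument for this lemma (it is simply declared straightforward), so there is nothing to compare against; your reduction $u+n=u(1+u^{-1}n)$ followed by the explicit geometric-series inverse $s=\sum_{i=0}^{t-1}(-m)^{i}$, valid because $J(R)$ is nilpotent in the finite rings considered here, is a standard and complete route, and your remark about upgrading a one-sided quasi-inverse to a two-sided one in the alternative argument is well taken.
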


An element $x$ of ring $R$ is called unipotent if $x - 1 \in N(R)$, or equivalently, 
$x = 1 + n$ for some $n \in N(R)$. Lemma \ref{obvious} shows that any unipotent element is also a  unit. We define $\mathcal{U}(R)=\{x \in R; x$ is unipotent$\}$. Furthermore, we have the following easy lemma.

\begin{Lemma}
\label{grupa}
    Let $R$ be a ring. Then $\mathcal{U}(R)$ is a subgroup of $R^*$.
\end{Lemma}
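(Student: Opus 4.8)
The plan is to verify the three subgroup criteria for $\mathcal{U}(R)$ inside $R^*$, namely that it contains $1$ and is closed under inversion and multiplication. That $\mathcal{U}(R)\subseteq R^*$ is already recorded in the text: if $n\in N(R)$ with $n^k=0$, then $1+n$ is inverted by the finite telescoping sum $\sum_{i=0}^{k-1}(-n)^i$, so every unipotent element is indeed a unit. The identity is unipotent since $1=1+0$ and $0\in N(R)$, which disposes of the first criterion.

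Closure under inversion holds in full generality and needs no hypothesis on $R$. Writing $x=1+n$ with $n^k=0$, the inverse exhibited above gives $x^{-1}-1=(1-x)x^{-1}=-n\,x^{-1}$. Since $x^{-1}=\sum_{i=0}^{k-1}(-n)^i$ is a polynomial in $n$, it commutes with $n$, and therefore $(x^{-1}-1)^k=(-n)^k(x^{-1})^k=0$. Thus $x^{-1}-1\in N(R)$ and $x^{-1}\in\mathcal{U}(R)$. The key point that makes this step routine is that everything in sight is built out of the single element $n$, so no noncommutativity intervenes.

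The crux, and the step I expect to be the main obstacle, is closure under multiplication. For $x=1+n$ and $y=1+m$ we have $xy=1+(n+m+nm)$, so the claim reduces to showing that $n+m+nm$ is nilpotent whenever $n$ and $m$ are. This is precisely where the structure of $R$ must enter: in an arbitrary noncommutative ring a sum of two nilpotents need not be nilpotent, so the statement cannot be forced through by formal manipulation alone, and the argument must invoke that $N(R)$ is closed under addition and multiplication. The way I would close the gap is to use that for the finite commutative local rings under consideration the set $N(R)$ coincides with the maximal ideal $J(R)$, which by Lemma~\ref{pid} satisfies $J(R)^{n}=(x^{n})=\{0\}$ and is hence nilpotent; in particular $N(R)=J(R)$ is a (two-sided) ideal. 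Then $n,m\in J(R)$ forces $n+m+nm\in J(R)\subseteq N(R)$, whence $xy\in\mathcal{U}(R)$. Combining the three steps shows $\mathcal{U}(R)=1+N(R)$ is closed under products and inverses and contains $1$, so it is a subgroup of $R^*$.
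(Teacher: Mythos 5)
Your handling of the identity and of inverses is correct and complete: $1=1+0$, and for $x=1+n$ with $n^{k}=0$ the inverse $\sum_{i=0}^{k-1}(-n)^{i}$ is a polynomial in $n$, so it commutes with $n$ and $(x^{-1}-1)^{k}=(-n)^{k}(x^{-1})^{k}=0$. The paper gives no proof at all (the lemma is merely labelled ``easy''), so on these two points there is nothing to compare; your argument is the standard one and works in any ring.

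The real issue is exactly where you located it, but your repair does not prove the lemma as stated. You close the multiplication step by assuming that $R$ is one of the finite \emph{commutative} local rings of the paper, so that $N(R)=J(R)$ is an ideal. The lemma, however, is asserted for an arbitrary ring $R$ (finite with identity, by the paper's standing convention), and --- crucially --- it is applied to the noncommutative rings $H(R)\cong M_{2}(R)$: the acting group in the main theorem is $\mathcal{U}(H(R))$. For those rings your argument gives nothing, and in fact no argument can, because the statement is false there: in $M_{2}(GF(q))$ the unipotent matrices $\left(\begin{smallmatrix}1&1\\0&1\end{smallmatrix}\right)$ and $\left(\begin{smallmatrix}1&0\\1&1\end{smallmatrix}\right)$ have product $\left(\begin{smallmatrix}2&1\\1&1\end{smallmatrix}\right)$, and $\left(\begin{smallmatrix}2&1\\1&1\end{smallmatrix}\right)-I=\left(\begin{smallmatrix}1&1\\1&0\end{smallmatrix}\right)$ has determinant $-1\neq 0$ and so is not nilpotent; reading the same computation modulo $J(R)$ shows that $\mathcal{U}(M_{2}(R))$ is not closed under multiplication for any of the rings considered in the paper. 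So your proposal proves a correct but strictly weaker statement (valid precisely when $N(R)$ is closed under sums and products, e.g.\ when it equals $J(R)$), while the lemma in the generality stated is simply false. The honest fix is either to add that hypothesis to the statement, or to replace $\mathcal{U}(R)$ throughout by the subgroup of $R^{*}$ that the unipotents generate --- which is what the paper implicitly uses anyway, since in Lemma~\ref{orbitmodulo} the ``unipotent orbit'' is identified with the orbit under $SL_{2}(F)$, the group generated by the elementary unipotents.
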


The following lemma is straightforward.

\begin{Lemma}
\label{mocsl}
   Let $F=GF(q)$. Then $|SL_2(F)|=q(q^2-1)$. 
\end{Lemma}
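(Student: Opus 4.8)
The plan is to compute $|GL_2(F)|$ by a direct counting argument and then extract $|SL_2(F)|$ as the kernel of the determinant homomorphism. First I would count the invertible matrices: a matrix lies in $GL_2(F)$ precisely when its two columns form a basis of the vector space $F^2$, so it suffices to count ordered bases. The first column may be any nonzero vector, giving $q^2-1$ choices, and the second column may be any vector not in the span of the first; excluding the $q$ scalar multiples of the first column leaves $q^2-q$ choices. Hence $|GL_2(F)| = (q^2-1)(q^2-q)$.

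Next I would invoke the determinant map $\det\colon GL_2(F)\to F^*$, which is a group homomorphism. I would verify it is surjective by exhibiting, for each $a\in F^*$, the matrix $\diag(a,1)$ of determinant $a$. Its kernel is by definition exactly $SL_2(F)$, so the first isomorphism theorem identifies $F^*$ with $GL_2(F)/SL_2(F)$, whence $|SL_2(F)| = |GL_2(F)|/|F^*| = |GL_2(F)|/(q-1)$.

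Finally I would simplify the resulting expression. Factoring gives $(q^2-1)(q^2-q)/(q-1) = (q-1)(q+1)\cdot q(q-1)/(q-1) = q(q-1)(q+1) = q(q^2-1)$, which is the asserted value.

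There is no genuine obstacle here; the argument is entirely routine and the lemma is standard. The only steps warranting an explicit word are the surjectivity of the determinant, handled by the diagonal matrices $\diag(a,1)$, and the enumeration of ordered bases of $F^2$, both of which are immediate.
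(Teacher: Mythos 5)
Your proof is correct and is the standard argument: count $|GL_2(F)|=(q^2-1)(q^2-q)$ via ordered bases, then divide by $|F^*|=q-1$ using the surjective determinant homomorphism. The paper states this lemma without proof (calling it straightforward), and your argument is exactly the routine computation it implicitly relies on.
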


Matrices $A, B \in  M_n(R)$ are similar if there exists an invertible matrix $P \in M_n(R)$ such that $A=P^{-1}BP$, so one matrix is in the orbit of the other of the conjugate group action of the group $GL_n(R)$ on $M_n(R)$. In this paper, we will consider the conjugate group action of the group of all unipotent elements in $GL_n(R)$ on $M_n(R)$. We say that $A, B \in  M_n(R)$ are unipotent similar if they are unipotent conjugate in $M_n(R)$. 

Let $R$ be a ring. We shall call $U(x)=\left(\begin{array}{cc}
0 & x\\
0 & 0
\end{array}\right) \in M_2(R)$ and $L(y)=\left(\begin{array}{cc}
0 & 0\\
y & 0
\end{array}\right) \in M_2(R)$ elementary unipotents for any $x, y \in R$.

The next lemma is proven by a direct calculation.

\begin{Lemma}
    \label{elementarydiag}
    Let $R$ be a commutative ring and $u \in R^*$. Then $L(-u^{-1})U(u-1)L(1)U(u^{-1}-1)=\left(\begin{array}{cc}
u & 0\\
0 & u^{-1}
\end{array}\right)$.
\end{Lemma}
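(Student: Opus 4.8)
This is a direct matrix computation, and the plan is to organize the multiplication so that the cancellations are transparent. Reading the elementary unipotents in their unipotent form $U(x)=\begin{pmatrix}1&x\\0&1\end{pmatrix}$ and $L(y)=\begin{pmatrix}1&0\\y&1\end{pmatrix}$, and using that $R$ is commutative (so there are no ordering issues and every entry is an ordinary element of $R$), the only ring-theoretic facts needed are $uu^{-1}=1$ and distributivity; the hypothesis $u\in R^*$ is used solely to guarantee that $u^{-1}$ exists and satisfies this relation.

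First I would split the four-fold product into two adjacent pairs, writing it as $\bigl(L(-u^{-1})U(u-1)\bigr)\bigl(L(1)U(u^{-1}-1)\bigr)$. Each pair is a single $2\times 2$ multiplication of a lower by an upper unipotent, which is immediate: one gets $L(1)U(u^{-1}-1)=\begin{pmatrix}1 & u^{-1}-1\\ 1 & u^{-1}\end{pmatrix}$ and $L(-u^{-1})U(u-1)=\begin{pmatrix}1 & u-1\\ -u^{-1} & u^{-1}\end{pmatrix}$, the lower-right entry of the latter simplifying via $-u^{-1}(u-1)+1=u^{-1}$.

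Finally I would multiply these two matrices. This is the only step where $u\in R^*$ genuinely enters, and it does so exactly in the vanishing of the off-diagonal entries: the $(1,2)$-entry is $(u^{-1}-1)+(u-1)u^{-1}=0$ and the $(2,1)$-entry is $-u^{-1}+u^{-1}=0$, while the diagonal collapses to $u$ and $u^{-1}$, giving $\diag(u,u^{-1})$ as claimed. I expect no real obstacle here, since the lemma is elementary; the only thing to be careful about is applying the identities $(u-1)u^{-1}=1-u^{-1}$ and $uu^{-1}=1$ consistently. I prefer the two-pair grouping over a left-to-right expansion of all four factors, because it isolates the cancellations into a single $2\times 2$ product and avoids carrying an unwieldy intermediate expression.
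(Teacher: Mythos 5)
Your computation is correct and matches the paper, which simply states that the lemma "is proven by a direct calculation" and gives no further detail; your two-pair grouping and the resulting entries $\bigl($ e.g. $(u^{-1}-1)+(u-1)u^{-1}=0$ $\bigr)$ all check out. You also correctly read $U(x)$ and $L(y)$ as the invertible unipotent matrices $I+\left(\begin{smallmatrix}0&x\\0&0\end{smallmatrix}\right)$ and $I+\left(\begin{smallmatrix}0&0\\y&0\end{smallmatrix}\right)$, which is the only reading consistent with the paper's later use of $U(t)^{-1}$ and $L(s)^{-1}$.
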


The next few technical lemmas will be useful.

\begin{Lemma}
  \label{nilpo}
    Let $R$ be a commutative local finite ring. Then $N=\left(\begin{array}{cc}
x & y\\
z & w
\end{array}\right) \in M_2(R)$ is a nilpotent if and only if $x+w, x^2+zw \in J(R)$.
\end{Lemma}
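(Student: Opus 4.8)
The plan is to push the problem down to the residue field $F=R/J(R)$, which is a field, and there to invoke the elementary fact that a $2$-by-$2$ matrix is nilpotent precisely when its trace and determinant vanish; the two conditions in the statement are then nothing but these two vanishing conditions read back in $R$.

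First I would record the standard identification $J(M_2(R))=M_2(J(R))$, so that reduction modulo $J(R)$ gives a ring epimorphism $M_2(R)\to M_2(F)$, $N\mapsto \overline N$, with kernel $M_2(J(R))$. Because $R$ is finite, $J(R)$ is nilpotent, and hence so is the ideal $M_2(J(R))$; fix $t$ with $M_2(J(R))^{t}=0$.

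The heart of the argument is the equivalence that $N$ is nilpotent in $M_2(R)$ if and only if $\overline N$ is nilpotent in $M_2(F)$. The forward direction is immediate, since a ring homomorphism carries nilpotents to nilpotents. For the converse I would argue that if $\overline N^{\,m}=0$ then $N^{m}\in M_2(J(R))$, whence $N^{mt}=(N^{m})^{t}\in M_2(J(R))^{t}=0$, so $N$ is nilpotent. Over the field $F$ a $2$-by-$2$ matrix is nilpotent exactly when its characteristic polynomial is $\lambda^{2}$, i.e. when $\Tr(\overline N)=0$ and $\det(\overline N)=0$ (equivalently $\overline N^{2}=0$). Translating back, $\Tr(\overline N)=0$ reads $x+w\in J(R)$, and $\det(\overline N)=0$ reads $xw-yz\in J(R)$. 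Finally, once $x+w\in J(R)$ one may substitute $w\equiv -x \pmod{J(R)}$ to obtain $xw-yz\equiv -(x^{2}+yz)\pmod{J(R)}$, so the determinant condition is the same as $x^{2}+yz\in J(R)$ (which is precisely the $(1,1)$-entry of $\overline N^{2}$), yielding the second condition of the statement.

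I do not expect a genuine obstacle here—the paper rightly calls the lemma straightforward—but the one step that must not be glossed over is the converse half of the displayed equivalence: it is exactly here that finiteness of $R$ (equivalently, nilpotency of $J(R)$) is used, since over an arbitrary commutative local ring nilpotency of $\overline N$ need not lift to nilpotency of $N$.
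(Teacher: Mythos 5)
Your proof is correct and follows essentially the same route as the paper: reduce modulo $J(R)$ (using nilpotency of $J(R)$ to lift nilpotency back to $M_2(R)$), and characterize nilpotency of a $2$-by-$2$ matrix over the residue field by the vanishing of trace and determinant, equivalently $\overline{N}^{2}=0$. One remark: you (correctly) arrive at the condition $x^{2}+yz\in J(R)$ rather than the $x^{2}+zw$ printed in the statement; since the paper's own proof also manifestly works with $x^{2}+yz$, the $zw$ in the statement appears to be a typo for $yz$, and your reading is the right one.
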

\begin{proof}
    Suppose that $x+w, x^2+zw \in J(R)$. Denote $x+w=j \in J(R)$, then $w^2+yz=(j-x)^2+yz  \in J(R)$ and therefore
    $N^2\in M_2(J(R))$. Since $J(R)$ is a nilpotent ideal, $N$ is nilpotent.
    On the other hand, $R/J$ is a finite field. Therefore, if $N$ is nilpotent, then $\overline{N}^2=0$, which implies that $x+w, x^2+zw \in J(R)$.
\end{proof}

\begin{Lemma}
\label{enaplusj}
    Let $R$ be a commutative local finite ring and $A \in M_2(R)$. Then $\det(1+A)=1+\Tr(A)+\det(A)$. Furthermore, if $N \in M_2(R)$ is nilpotent, then $\Tr(N),\det(N) \in J(R)$.
\end{Lemma}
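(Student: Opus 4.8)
The plan is to treat the two assertions separately, as each is elementary. For the determinant identity I would simply expand: writing $A=\begin{pmatrix} a & b \\ c & d\end{pmatrix}$, one has $1+A=\begin{pmatrix} 1+a & b \\ c & 1+d\end{pmatrix}$, so that $\det(1+A)=(1+a)(1+d)-bc=1+(a+d)+(ad-bc)$. Recognizing $a+d=\Tr(A)$ and $ad-bc=\det(A)$ gives the claimed formula. This computation is valid over any commutative ring with identity, so the local and finiteness hypotheses are not needed for this part.

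For the second assertion I would pass to the residue field. Since the canonical projection $R\to R/J(R)$ is a ring homomorphism, it induces a ring homomorphism $M_2(R)\to M_2(R/J(R))$, $N\mapsto \overline N$, and both trace and determinant are compatible with it, that is $\overline{\Tr(N)}=\Tr(\overline N)$ and $\overline{\det(N)}=\det(\overline N)$. If $N$ is nilpotent, then $\overline N$ is nilpotent in $M_2(R/J(R))$. Because $R/J(R)$ is a field (namely $GF(q)$), a nilpotent $2\times 2$ matrix over it has characteristic polynomial $t^2$, whence $\Tr(\overline N)=0$ and $\det(\overline N)=0$. Translating these equalities back through the projection yields $\Tr(N),\det(N)\in J(R)$, as desired.

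Alternatively, the trace statement is immediate from Lemma \ref{nilpo}, which already records $\Tr(N)=x+w\in J(R)$; the determinant statement can then be recovered by combining the two congruences of that lemma modulo $J(R)$. I expect the only point requiring any care, and it is a minor one, to be the compatibility of $\det$ and $\Tr$ with the reduction map, together with the standard fact that over a field the characteristic polynomial of a nilpotent matrix is a pure power of $t$. There is thus no genuine obstacle: the first part is a formal identity and the second reduces at once to linear algebra over the finite field $R/J(R)$.
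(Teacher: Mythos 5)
Your proof is correct. The determinant identity is handled exactly as in the paper, by direct expansion. For the second assertion your route differs slightly from the paper's: you reduce modulo $J(R)$ and invoke the fact that a nilpotent $2\times 2$ matrix over the field $R/J(R)$ has characteristic polynomial $t^{2}$, which gives $\Tr(N)$ and $\det(N)$ in $J(R)$ simultaneously and uniformly. The paper instead treats the two quantities separately: it reads off $\Tr(N)=x+w\in J(R)$ directly from Lemma~\ref{nilpo}, and for the determinant it observes that a nilpotent matrix is not invertible, so $\det(N)\notin R^{*}$, and then uses locality of $R$ to conclude $\det(N)\in J(R)$. Your residue-field argument is marginally more self-contained (it does not need Lemma~\ref{nilpo} and works over any local ring with any residue field, finite or not), while the paper's determinant argument is the more economical one-liner exploiting that in a local ring every non-unit lies in the radical. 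Both are complete; there is no gap in your proposal.
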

\begin{proof}
    A direct calculation shows that $\det(1+A)=1+\Tr(A)+\det(A)$. Suppose $N=\left(\begin{array}{cc}
x & y\\
z & w
\end{array}\right) \in M_2(R)$ is a nilpotent matrix. By Lemma \ref{nilpo}, we have $\Tr(N)=x+w \in J(R)$. Obviously, $\det(N) \notin R^*$, so $\det(N) \in J(R)$ as well.
\end{proof}

\begin{Lemma}
\label{kvadrat}
    Let $R$ be a commutative finite ring such that $2 \in R^*$. Then for any $j \in J(R)$ there exists $x \in J(R)$ such that $1+j=(1+x)^2$.
\end{Lemma}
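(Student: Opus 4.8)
The plan is to reformulate the claim as a solvability statement. Since $(1+x)^2 = 1 + 2x + x^2$, finding $x \in J(R)$ with $(1+x)^2 = 1+j$ is equivalent to solving $x^2 + 2x = j$ inside $J(R)$. I would therefore introduce the map $g : J(R) \to J(R)$ defined by $g(x) = x^2 + 2x = (1+x)^2 - 1$ and prove that it is surjective; the desired $x$ is then any preimage of $j$. The first point to check is that $g$ does land in $J(R)$, which is immediate since $J(R)$ is an ideal, so $x \in J(R)$ forces $x^2 + 2x \in J(R)$.

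The core of the argument is to show that $g$ is injective. Given $g(x) = g(y)$, I would compute $g(x) - g(y) = (x^2 - y^2) + 2(x - y) = (x - y)(x + y + 2)$. Here $x + y \in J(R)$ and $2 \in R^*$ by hypothesis, so Lemma \ref{obvious} gives $x + y + 2 \in R^*$. Cancelling this unit forces $x = y$, and hence $g$ is injective. This use of Lemma \ref{obvious} together with the assumption $2 \in R^*$ is precisely where the oddness of the order enters, and it is the main point to get right.

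Finally, I would invoke finiteness of $R$: since $J(R)$ is a finite set and $g$ is an injective self-map of $J(R)$, it is automatically bijective, hence surjective. Therefore for the given $j \in J(R)$ there exists $x \in J(R)$ with $g(x) = j$, that is $(1+x)^2 = 1 + j$, as required.

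I might also remark that, if one prefers an explicit construction avoiding the counting argument, the same $x$ can be produced by successive approximation: setting $x_0 = 2^{-1} j$ and $x_{k+1} = 2^{-1}(j - x_k^2)$, the differences $x_{k+1} - x_k$ land in ever higher powers of $J(R)$, so by nilpotency of $J(R)$ the sequence stabilizes at a fixed point solving $x^2 + 2x = j$. The obstacle in that variant is bookkeeping the filtration by the powers $J(R)^k$, which the finiteness-plus-injectivity argument sidesteps entirely.
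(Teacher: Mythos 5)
Your proof is correct and follows essentially the same route as the paper: both consider the map $x\mapsto x^2+2x=(1+x)^2-1$ on $J(R)$, prove it is injective by factoring the difference as $(x-y)(x+y+2)$ with $x+y+2\in R^*$ (via Lemma~\ref{obvious} and $2\in R^*$), and conclude surjectivity from finiteness. If anything, your write-up of the injectivity step is slightly more explicit than the paper's.
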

\begin{proof}
    Let $\phi:J(R) \rightarrow J(R)$ be a mapping defined with $\phi(x)=x^2+2x$.
    Define $u=2(1+2^{-1}x)$ and observe that for $x \in J$ we have $u \in R^*$.
    Since $\phi(x)=xu$, we see that $\phi$ is injective and therefore bijective.
    Therefore, for any $j \in J(R)$, there exists $x \in J(R)$ such that $\phi(x)=j$.
    But $\phi(x)=(1+x)^2-1$, so we have $1+j=(1+x)^2$.
\end{proof}

Now, we can prove our first theorem.

\begin{Theorem}
\label{elementarne}
    Let $R$ be a commutative local finite ring such that $2 \in R^*$ and suppose $A \in M_2(R)$ is unipotent. Then $A$ is a product of elementary unipotents and central elements in $M_2(R)$.
\end{Theorem}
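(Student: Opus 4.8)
The plan is to reduce $A$ to a diagonal matrix by left and right multiplication with elementary unipotents, and then to split off a central scalar so that the remaining diagonal factor has determinant $1$ and can be handled by Lemma~\ref{elementarydiag}. Write $A=1+N$ with $N\in M_2(R)$ nilpotent. By Lemma~\ref{enaplusj} we have $\det A=1+\Tr(N)+\det(N)$ with $\Tr(N),\det(N)\in J(R)$, so $u:=\det A\in 1+J(R)\subseteq R^*$ by Lemma~\ref{obvious}; in particular $A\in GL_2(R)$ and $\overline A$ is invertible over the field $R/J(R)$.

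First I would arrange a unit in the $(1,1)$ position. Writing $A=\left(\begin{smallmatrix}a&b\\c&d\end{smallmatrix}\right)$, invertibility of $\overline A$ forces the first column to be nonzero modulo $J(R)$, so at least one of $a,c$ lies in $R^*$ (in a local ring an element is a unit iff it is nonzero modulo the radical). If $a\notin R^*$ but $c\in R^*$, I left-multiply by $U(1)$; the new $(1,1)$ entry becomes $a+c\in R^*$ by Lemma~\ref{obvious}. Once the $(1,1)$ entry is a unit, left-multiplication by a suitable $L(\,\cdot\,)$ clears the $(2,1)$ entry and right-multiplication by a suitable $U(\,\cdot\,)$ clears the $(1,2)$ entry, so after these elementary operations $A$ is transformed into a diagonal matrix $\diag(s,t)$. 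Since every elementary unipotent has determinant $1$, we have $st=\det A=u$, and therefore $s,t\in R^*$.

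To finish, I split off a central factor. Because $2\in R^*$ and $u\in 1+J(R)$, Lemma~\ref{kvadrat} yields $c_0\in 1+J(R)\subseteq R^*$ with $c_0^{\,2}=u$. Then
\[
\diag(s,t)=(c_0 I)\,\diag(s c_0^{-1},t c_0^{-1}),\qquad (s c_0^{-1})(t c_0^{-1})=u\,c_0^{-2}=1,
\]
so the second factor equals $\diag(v,v^{-1})$ with $v:=s c_0^{-1}\in R^*$, which is a product of elementary unipotents by Lemma~\ref{elementarydiag}. Since $c_0 I$ is central and the inverse of every elementary unipotent is again an elementary unipotent ($U(x)^{-1}=U(-x)$, $L(y)^{-1}=L(-y)$), undoing the elimination expresses $A$ as a product of elementary unipotents and the central element $c_0 I$.

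The only delicate point is the first reduction step: unlike the classical field case, the $(1,1)$ entry of $A$ need not be a unit, since $\overline N$ may be a nonzero nilpotent. This is precisely where the local structure of $R$ and the transvection trick above are needed. The other crucial input is that $\det A$ is a unit congruent to $1$ modulo $J(R)$, which is exactly the hypothesis under which Lemma~\ref{kvadrat} supplies a central square root and reduces the diagonal part to the form treated in Lemma~\ref{elementarydiag}.
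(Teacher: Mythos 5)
Your proof is correct and follows the same overall strategy as the paper's: reduce $A$ to a diagonal matrix by an LDU-type factorization with a unit pivot in position $(1,1)$, use Lemma~\ref{enaplusj} together with Lemma~\ref{kvadrat} to extract a central square root $c_0I$ of $\det A\in 1+J(R)$, and dispose of the remaining determinant-one diagonal factor via Lemma~\ref{elementarydiag}. The one genuine point of divergence is how the unit pivot is obtained. The paper proves that at least one of the diagonal entries $1+x$, $1+w$ of a unipotent matrix must be a unit --- this uses $x+w\in J(R)$ from Lemma~\ref{nilpo} together with $2\in R^*$ --- and, if necessary, conjugates by the permutation matrix to move that entry into position $(1,1)$. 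You instead use only the invertibility of $\overline A$ over the residue field to locate a unit in the first column, and repair a non-unit $(1,1)$ entry with the transvection $U(1)$ and Lemma~\ref{obvious}. Your variant is marginally more general: it shows that every $A\in GL_2(R)$ whose determinant lies in $1+J(R)$ (not just every unipotent $A$) is a product of elementary unipotents and a central element, and it invokes $2\in R^*$ only through Lemma~\ref{kvadrat}; the paper's variant instead records a small structural fact specific to unipotent matrices. Either route is sound.
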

\begin{proof}
    Denote $A=1+N$ for a nilpotent matrix $N=\left(\begin{array}{cc}
x & y\\
z & w
\end{array}\right)$. 

 Now, suppose firstly that $1+x \in R^*$. Then a direct calculation shows that $1+N=L(z(1+x)^{-1})\left(\begin{array}{cc}
1+x & 0\\
0 & \det(1+N)(1+x)^{-1}
\end{array}\right) U(y(1+x)^{-1})$. However, we have $\det(1+N) \in 1+J(R)$ by Lemma \ref{enaplusj} and therefore by Lemma \ref{kvadrat}, there exists $r \in 1+J(R) \subseteq R^*$ such that $\det(1+N)=r^2$. This implies that $\left(\begin{array}{cc}
1+x & 0\\
0 & \det(1+N)(1+x)^{-1}
\end{array}\right)=rI \left(\begin{array}{cc}
(1+x)r ^{-1} & 0\\
0 & r(1+x)^{-1}
\end{array}\right)$. Since $rI$ is central, Lemma \ref{elementarydiag} now concludes the proof in this case.

 On the other hand, if $1+x \notin R^*$ and $1+w \in R^*$, we denote $P=\left(\begin{array}{cc}
0 & 1\\
1 & 0
\end{array}\right)$ and observe that $P^{-1}AP=\left(\begin{array}{cc}
w & z\\
y & x
\end{array}\right)$. By the above, we see that $P^{-1}AP$ is a product of elementary unipotents and central elements. Since a conjugate of an elementary unipotent is again an elementary unipotent (and obviously conjugation preserves central elements), the statement holds in this case as well. 
 
 Finally, examine the case  $1+x,1+w \in J(R)$. By Lemma \ref{obvious}, $-(1-(1+w))=w \in R^*$. But $w-x=(1+w)-(1+x) \in J(R)$ and $w+x \in J(R)$ by Lemma \ref{nilpo}. Therefore $2w \in J(R)$, and since $2 \in R^*$, we have $w \in J(R)$, a contradiction. 
\end{proof}

\bigskip

 \section{Orbits}
\bigskip

In this section, we determine the sizes of unipotent orbits. Theorem \ref{elementarne} shows that we must firstly examine conjugations with elementary unipotents.

A direct calculation shows that the next lemma holds.

\begin{Lemma}
    \label{elementconj}
    Let $R$ be a commutative ring, $A=\left(\begin{array}{cc}
a & b\\
c & d
\end{array}\right) \in M_2(R)$ and $s,t \in R$. Then the following two statements hold.
    \begin{enumerate}
        \item 
        $U(t)AU(t)^{-1}=\left(\begin{array}{cc}
a+tc & b+t(d-a)-t^2c\\
c & d-tc
\end{array}\right)$.
        \item
        $L(s)AL(s)^{-1}=\left(\begin{array}{cc}
a-sb & b \\
c+s(a-d)-s^2b & d+sb
\end{array}\right)$.
    \end{enumerate}
\end{Lemma}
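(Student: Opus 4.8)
The plan is to establish both identities by a single direct matrix multiplication apiece, exploiting that the elementary unipotents are triangular and hence trivially inverted. First I would record the explicit shapes of the conjugating matrices together with their inverses: $U(t)=\begin{pmatrix} 1 & t \\ 0 & 1 \end{pmatrix}$ with $U(t)^{-1}=U(-t)=\begin{pmatrix} 1 & -t \\ 0 & 1 \end{pmatrix}$, and $L(s)=\begin{pmatrix} 1 & 0 \\ s & 1 \end{pmatrix}$ with $L(s)^{-1}=L(-s)=\begin{pmatrix} 1 & 0 \\ -s & 1 \end{pmatrix}$. The inverse is obtained simply by negating the off-diagonal entry, since each conjugator has the form $I+N$ with $N^{2}=0$, so that $(I+N)^{-1}=I-N$; in particular no invertibility hypothesis on $t$ or $s$ is required, which is why the lemma holds for arbitrary $s,t\in R$.

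For statement (1) I would carry out the product in two stages. Left multiplication gives $U(t)A=\begin{pmatrix} a+tc & b+td \\ c & d \end{pmatrix}$, i.e. the top row of $A$ is incremented by $t$ times the bottom row. Right multiplication of this by $U(t)^{-1}$ then adds $-t$ times the first column to the second, leaving the first column fixed and producing top-right entry $b+td-t(a+tc)=b+t(d-a)-t^{2}c$ and bottom-right entry $d-tc$. Reading off the four entries yields precisely the claimed matrix.

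Statement (2) is the transpose-symmetric computation. Left multiplication by $L(s)$ adds $s$ times the top row to the bottom row, giving $\begin{pmatrix} a & b \\ c+sa & d+sb \end{pmatrix}$, and right multiplication by $L(s)^{-1}$ subtracts $s$ times the second column from the first. The top-right entry stays $b$; the entries that move are the bottom-left, which becomes $c+sa-s(d+sb)=c+s(a-d)-s^{2}b$, and the diagonal, which becomes $a-sb$ and $d+sb$. This is exactly the stated formula.

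There is no genuine obstacle: the lemma is purely computational. The only points needing care are bookkeeping ones, namely correctly tracking the quadratic terms $-t^{2}c$ and $-s^{2}b$ generated by the two-sided product (these are the entries most easily lost), and using the correct inverses $U(t)^{-1}=U(-t)$ and $L(s)^{-1}=L(-s)$ rather than naive transpositions. Since each verification reduces to a product of three explicit $2\times 2$ matrices, both identities follow immediately.
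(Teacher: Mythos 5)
Your proof is correct and matches the paper's approach exactly: the paper offers no written argument beyond ``a direct calculation shows that the next lemma holds,'' and your explicit three-matrix products (with $U(t)^{-1}=U(-t)$, $L(s)^{-1}=L(-s)$ justified by $(I+N)^{-1}=I-N$ for $N^2=0$) supply precisely that calculation. You have also correctly read $U(t)$ and $L(s)$ as $I$ plus the displayed nilpotent matrices, which is what the paper's subsequent usage (e.g.\ Lemma~\ref{elementarydiag} and Theorem~\ref{elementarne}) requires even though its definition displays only the nilpotent part.
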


%

We also have the following structural lemma, the proof of which is implicit in the proof of \cite[Theorem 4.6]{dolzan1}.

\begin{Lemma}
    \label{structure}
    Let $R$ be a finite commutative local ring of order $q^n$, $R/J$ a field of order $q$ and $J^{n-1} \neq 0$. Then there exist $g \in R^*$ and $x \in J \setminus J^2$ such that $R=\{\sum_{i=0}^{n-1}\lambda_i x^i;  \lambda_i \in \{0, g, g^2, \ldots, g^{q-1} \} \text { for every } i \in \{0,1,\ldots,n-1\}\}$. Furthermore, if $g^k-g^l \in J$ for some $k,l \in \{0,1,\ldots,q-1\}$, then $k=l$.
\end{Lemma}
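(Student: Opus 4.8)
The plan is to first extract a chain-ring structure from the purely numerical hypotheses, then build the required transversal from the cyclic group $GF(q)^*$, and finally establish the unique $x$-adic expansion by a leading-term argument backed by a counting check. I would begin by analysing the radical filtration $R=J^0\supseteq J^1\supseteq J^2\supseteq\cdots$. Since $R$ is finite local, every composition factor of $R$ is isomorphic to $R/J\cong GF(q)$, so the length of $R$ as an $R$-module equals $n$ (because $|R|=q^n$). Each quotient $J^i/J^{i+1}$ is a vector space over $GF(q)$; writing $d_i=\dim_{GF(q)}J^i/J^{i+1}$ we have $\sum_i d_i=n$ and $d_0=1$. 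The hypothesis $J^{n-1}\neq 0$ forces the filtration to have at least $n$ nonzero steps, so necessarily $d_i=1$ for every $i\in\{0,1,\ldots,n-1\}$ and $J^n=0$. In particular $J/J^2$ is one-dimensional, so for any $x\in J\setminus J^2$ Nakayama's lemma gives $J=(x)$, whence $J^i=(x^i)$ with $x^{n-1}\neq 0=x^n$. This is the content I would reconstruct in place of citing \cite[Theorem 4.6]{dolzan1}.

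Next I would choose $g$. The group $GF(q)^*$ is cyclic of order $q-1$; let $\bar g$ be a generator and lift it to an element $g\in R$. Since $\bar g\neq 0$ we have $g\notin J$, so $g\in R^*$ because $R$ is local. The powers $\bar g,\bar g^2,\ldots,\bar g^{q-1}$ run through all of $GF(q)^*$ (with $\bar g^{q-1}=1$), so the set $\{0,g,g^2,\ldots,g^{q-1}\}$ maps bijectively onto $GF(q)$ and is a complete set of coset representatives for $J$ in $R$. This already yields the final assertion: if $g^k-g^l\in J$ then $\bar g^k=\bar g^l$ in $GF(q)$, and since $\bar g$ has order $q-1$ this forces $k\equiv l\pmod{q-1}$, hence $k=l$ for indices in the stated range.

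Finally I would prove that every $a\in R$ has a unique expansion $a=\sum_{i=0}^{n-1}\lambda_i x^i$ with each $\lambda_i$ in the transversal $\Lambda=\{0,g,\ldots,g^{q-1}\}$. For existence, reduce $a$ modulo $J$ to pick the unique $\lambda_0\in\Lambda$ with $a-\lambda_0\in J=(x)$, write $a-\lambda_0=x a_1$, and iterate; the process terminates because $x^n=0$. For uniqueness, suppose $\sum\lambda_i x^i=\sum\mu_i x^i$ with $\lambda_i,\mu_i\in\Lambda$ and let $i_0$ be the least index with $\lambda_{i_0}\neq\mu_{i_0}$; since distinct elements of $\Lambda$ are incongruent modulo $J$, we have $\lambda_{i_0}-\mu_{i_0}\in R^*$, so $(\lambda_{i_0}-\mu_{i_0})x^{i_0}$ generates $J^{i_0}=(x^{i_0})$ and thus lies in $J^{i_0}\setminus J^{i_0+1}$, whereas the remaining terms all lie in $J^{i_0+1}$; this contradicts the assumed equality. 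Equivalently, the map $\Lambda^n\to R$, $(\lambda_i)\mapsto\sum\lambda_i x^i$, is injective between two sets of cardinality $q^n$, hence bijective, which simultaneously gives existence and uniqueness and proves that $R=\{\sum_{i=0}^{n-1}\lambda_i x^i:\lambda_i\in\Lambda\}$.

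I expect the main obstacle to be the first step: pinning down that the numerical conditions $|R|=q^n$, $|R/J|=q$, and $J^{n-1}\neq 0$ force $R$ to be a chain ring with a one-dimensional radical filtration, since everything afterwards (the choice of $g$ and the $x$-adic expansion) is then routine. The delicate point is precisely that $J^{n-1}\neq 0$ is what rules out any quotient $J^i/J^{i+1}$ of dimension greater than one, and hence guarantees that $J$ is principal and that the expansion has exactly $n$ digits.
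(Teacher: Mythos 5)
Your argument is self-contained where the paper simply defers to \cite[Theorem 4.6]{dolzan1}, and its substance is correct: the length count forcing $\dim_{GF(q)}(J^i/J^{i+1})=1$ for $0\le i\le n-1$ (hence $J=(x)$ by Nakayama and $J^n=0$), the lift $g$ of a generator of $(R/J)^*$, and the $x$-adic expansion via the leading-term and cardinality arguments are all sound, and are presumably what the cited reference contains. Since the paper gives no proof of its own, this is a genuine service rather than a rederivation.

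There is, however, one step that does not deliver what it claims, at the end of your second paragraph. From $g^k-g^l\in J$ you correctly deduce $\bar g^{\,k}=\bar g^{\,l}$ and hence $k\equiv l\pmod{q-1}$, but for $k,l\in\{0,1,\ldots,q-1\}$ this does \emph{not} force $k=l$: the pair $(k,l)=(0,q-1)$ satisfies the congruence, and indeed $g^0-g^{q-1}\in J$ because $\bar g^{\,q-1}=1=\bar g^{\,0}$. In fact no choice of $g\in R^*$ can make the ``furthermore'' clause true with the index set $\{0,1,\ldots,q-1\}$, since the $q$ residues $\bar g^{\,0},\ldots,\bar g^{\,q-1}$ all lie in the $(q-1)$-element group $(R/J)^*$ and two of them must coincide. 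This is an off-by-one defect in the lemma's statement rather than in your strategy --- the exponents actually appearing in the displayed description of $R$ are $1,\ldots,q-1$, and for that range your argument is complete --- but your closing phrase ``hence $k=l$ for indices in the stated range'' asserts something false instead of flagging the discrepancy. You should either prove the clause for $k,l\in\{1,\ldots,q-1\}$ explicitly and note that this is the version used later (e.g.\ in Lemma \ref{prehodnagor}, where only the pairwise incongruence of the representatives modulo $J$ is needed), or point out that the clause as literally stated cannot hold.
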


Now, we can start investigating the orbits under unipotent conjugation.

\begin{Lemma}
\label{uniform}
    Let $R$ be a finite commutative local principal ring of cardinality $q^{n}$, where $q=p^r$ for some odd prime number $p$ and integers $n, r$ with $R/J(R) \simeq GF(q)$.
    Choose $A=\left(\begin{array}{cc}
a & b\\
c & d
\end{array}\right) \in M_2(R)$ and suppose $d-a \in J^\alpha \setminus J^{\alpha +1}, b \in J^\beta \setminus J^{\beta +1}$ and $c \in J^\gamma \setminus J^{\gamma +1}$. Denote $\delta=\min\{\alpha, \beta, \gamma\}$. 
Then there exists $A'=d'I+\left(\begin{array}{cc}
a' & b'\\
c' & 0
\end{array}\right) \in \OO_{\mathcal{U}(M_2(R))}(A)$ such that $a' \in J^\delta \setminus J^{\delta +1}$.
\end{Lemma}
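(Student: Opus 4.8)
The plan is to reinterpret the conclusion as a statement about the \emph{diagonal difference} of a representative of the orbit. Writing $A' = d'I + \left(\begin{array}{cc} a' & b' \\ c' & 0 \end{array}\right) = \left(\begin{array}{cc} d'+a' & b' \\ c' & d' \end{array}\right)$, we see that $a'$ is precisely the difference of the two diagonal entries of $A'$, while $d'$ is just its bottom-right entry; so the decomposition itself imposes nothing, and the only real requirement is that this diagonal difference have valuation exactly $\delta$. Since $d-a \in J^\alpha \setminus J^{\alpha+1}$, the matrix $A$ already has diagonal difference of valuation $\alpha$. Hence the whole content of the lemma is that, by conjugating with a unipotent element, we can lower the valuation of the diagonal difference to the minimal value $\delta=\min\{\alpha,\beta,\gamma\}$, after which we simply read off $A'$ from the resulting matrix.

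First I would record, via Lemma \ref{elementconj}, how the elementary unipotents move the diagonal difference. Conjugation by $U(t)$ replaces the diagonal entries $a,d$ by $a+tc,\ d-tc$, so the diagonal difference passes from $a-d$ to $(a-d)+2tc$; conjugation by $L(s)$ replaces them by $a-sb,\ d+sb$, changing the difference to $(a-d)-2sb$. This is the crucial point: each elementary conjugation adds $\pm 2$ times an off-diagonal entry to the diagonal difference, and since $2 \in R^*$ (because $p$ is odd) this added term has exactly the valuation of the corresponding off-diagonal entry.

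It then remains to split into cases. If $\alpha=\delta$ there is nothing to do: $A$ already has diagonal difference $a-d \in J^\delta \setminus J^{\delta+1}$, so $A'=A$ works. If $\alpha>\delta$, then $\delta=\min\{\beta,\gamma\}$, so some off-diagonal entry has valuation $\delta$. If $\gamma=\delta$ I conjugate by the unipotent $U(1)$ (which, being unipotent, keeps us inside $\OO_{\mathcal{U}(M_2(R))}(A)$) to obtain diagonal difference $(a-d)+2c$; here $2c \in J^\delta \setminus J^{\delta+1}$ while $a-d \in J^\alpha \subseteq J^{\delta+1}$, so the two valuations cannot interfere and the sum lies in $J^\delta \setminus J^{\delta+1}$. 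If instead $\beta=\delta$ I conjugate by $L(1)$ and argue identically with $(a-d)-2b$. The only subtle point is this \emph{non-cancellation} — adding two elements of valuation $\delta$ could in principle raise the valuation — which I sidestep precisely by conjugating only when $\alpha>\delta$, forcing the unwanted term $a-d$ into $J^{\delta+1}$; this, together with $2 \in R^*$, is the entire substance of the argument.
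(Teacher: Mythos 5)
Your proof is correct and follows essentially the same route as the paper: both use Lemma~\ref{elementconj} to track how conjugation by $U(t)$ and $L(s)$ shifts the diagonal difference by $2tc$ or $-2sb$, invoke $2\in R^*$, and reduce to the case $\alpha=\delta$ before reading off the decomposition $A'=d'I+\left(\begin{smallmatrix} a'-d' & b'\\ c' & 0\end{smallmatrix}\right)$. You merely make explicit the case split and the non-cancellation argument that the paper compresses into a ``without any loss of generality.''
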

\begin{Remark}
    By Lemma \ref{pid}, we know that $J(R)^n=0$. By a slight abuse of notation, we shall abide by the convention that $J(R)^n\setminus J(R)^{n+1}$ equals $\{0\}$, since it simplifies many of our statements. We shall use this same convention throughout the remainder of this paper.
\end{Remark}
\begin{proof}
    If $A'=\left(\begin{array}{cc}
a' & b'\\
c' & d'
\end{array}\right)=U(t)AU(t)^{-1}$ then $a'-d'=a-d+2tc$, and if $A'=\left(\begin{array}{cc}
a' & b'\\
c' & d'
\end{array}\right)=L(s)AL(s)^{-1}$ then $d'-a'=d-a+2sb$, both by Lemma \ref{elementconj}. Since $p$ is odd, we have $2 \in R^*$. This implies that we can assume without any loss of generality that $\alpha \leq \beta, \gamma$, so $\delta=\alpha$. 
    
    

    We have therefore proved that there exists $A'=\left(\begin{array}{cc}
a' & b'\\
c' & d'
\end{array}\right) \in \OO_{\mathcal{U}(M_2(R))}(A)$ such that $a'-d' \in J^\delta \setminus J^{\delta +1}$. Therefore  $A'=\left(\begin{array}{cc}
d' & 0\\
0 & d'
\end{array}\right)+\left(\begin{array}{cc}
a'-d' & b'\\
c' & 0
\end{array}\right)$ and thus the lemma holds.
\end{proof}

The next lemma examines the connection between orbits of $A \in M_2(R)$ and the orbits of $\overline {A} \in M_2(R/J(R))$.

\begin{Lemma}
\label{prehodnagor}
Let $R$ be a finite local principal ring of cardinality $q^{n}$, where 
$q = p^{r}$ for some prime $p$ with $R/J(R) \cong GF(q)$.
For $k \in \{1,2,\ldots,n\}$, let
$\widetilde{A} \in M_2(R/J(R)^{k})$
denote the images of $A \in M_2(R)$ under the canonical projections.
Suppose that
$\dim_{R/J(R)}\!\bigl(\operatorname{im}(\operatorname{ad}(\overline{A}))\bigr) = 2$.
Then the conjugacy orbit of $\widetilde{A}$ under the group 
$\mathcal{U}(M_2(R/J(R)^{k}))$ satisfies
\[
\bigl|\mathcal{O}_{\mathcal{U}(M_2(R/J(R)^{k}))}(\widetilde{A})\bigr|
   = 
\bigl|\mathcal{O}_{\mathcal{U}(M_2(R/J(R)))}(\overline{A})\bigr|
\, q^{2(k-1)}.
\]
Moreover, the number of distinct $\mathcal{U}(M_2(\cdot))$–orbits in 
$M_2(R/J(R)^{k})$ is equal to
\[
q^{2(k-1)} \times 
\#\text{orbits in } M_2(R/J(R)).
\]
\end{Lemma}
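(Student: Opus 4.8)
The plan is to relate the two group actions through the canonical reduction $\rho\colon M_2(R/J^k)\to M_2(R/J)$ and the induced map $\pi\colon \mathcal{U}(M_2(R/J^k))\to \mathcal{U}(M_2(R/J))$, and then to apply the orbit--stabilizer theorem on each level. Write $S=R/J^k$, $G_k=\mathcal{U}(M_2(S))$ and $G_1=\mathcal{U}(M_2(R/J))$. First I would check that $\pi$ is a surjective homomorphism (it is a group map by Lemma \ref{grupa}) with kernel $1+M_2(J/J^k)$. Both surjectivity and the kernel description follow from Lemma \ref{nilpo}: any lift of a trace- and determinant-zero matrix over the field $R/J$ has trace and determinant in $J/J^k$, hence is nilpotent, and likewise every $N\in M_2(J/J^k)$ is nilpotent. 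Since $|J/J^k|=q^{k-1}$ by Lemma \ref{pid}, this gives $|\ker\pi|=|M_2(J/J^k)|=q^{4(k-1)}$, and therefore $|G_k|=|G_1|\,q^{4(k-1)}$.

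By orbit--stabilizer, $|\OO_{G_k}(\widetilde A)|=|G_k|/|C_{G_k}(\widetilde A)|$ and similarly on level $1$, so the first formula reduces to the single identity $|C_{G_k}(\widetilde A)|=|C_{G_1}(\overline A)|\,q^{2(k-1)}$. The technical heart is to pin down the ordinary centralizer $C_{M_2(S)}(\widetilde A)$. The hypothesis $\dim_{R/J}\im(\ad(\overline A))=2$ says exactly that $\overline A$ is non-scalar, so over the residue field both $\im(\ad(\overline A))$ and $\ker(\ad(\overline A))=\langle I,\overline A\rangle$ are $2$-dimensional. Since $\ad$ commutes with $\rho$ and $\rho$ is onto, $\im(\ad(\widetilde A))$ reduces onto $\im(\ad(\overline A))$; two lifts of a basis generate it by Nakayama, and as their reductions are independent these lifts extend to an $S$-basis of $M_2(S)$, so $\im(\ad(\widetilde A))$ is free of rank $2$ and has size $|S|^2$. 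The identity $|\ker(\ad(\widetilde A))|\cdot|\im(\ad(\widetilde A))|=|S|^4$ then forces $|C_{M_2(S)}(\widetilde A)|=|S|^2$; since $SI+S\widetilde A$ is a free rank-$2$ submodule of that same size contained in the centralizer, we conclude $C_{M_2(S)}(\widetilde A)=SI+S\widetilde A$.

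With the centralizer identified, I would count its unipotent elements. An element $N=pI+q\widetilde A$ is nilpotent iff $\overline N=\overline p\,I+\overline q\,\overline A$ is, by Lemmas \ref{nilpo} and \ref{enaplusj}, and this depends only on $(\overline p,\overline q)$. Hence $\rho$ carries the nilpotent elements of $SI+S\widetilde A$ onto those of $(R/J)I+(R/J)\overline A$, each fibre being obtained by freely choosing the $J/J^k$-parts of $p$ and $q$, i.e. of size $q^{2(k-1)}$. Transferring along $N\mapsto 1+N$ gives $|C_{G_k}(\widetilde A)|=|C_{G_1}(\overline A)|\,q^{2(k-1)}$, and combining with $|G_k|=|G_1|q^{4(k-1)}$ yields the first displayed formula.

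For the orbit count I would argue by fibration. The map $\rho$ is $\pi$-equivariant, so it carries $G_k$-orbits onto $G_1$-orbits, and the preimage of a $G_1$-orbit $\OO$ is a union of $G_k$-orbits of total size $|\OO|\,q^{4(k-1)}$ (each point has fibre $M_2(J/J^k)$ of size $q^{4(k-1)}$). When every orbit in this preimage satisfies the rank hypothesis, the first part makes each of them have size $|\OO|\,q^{2(k-1)}$; dividing shows exactly $q^{2(k-1)}$ orbits lie above $\OO$, and summing over all $\OO$ gives the stated count. I expect the uniformity across the fibre to be the main obstacle: the orbit-size formula holds only when $\overline A$ is non-scalar and genuinely fails on the central-residue locus (for instance the singleton $\OO=\{\lambda I\}$ lifts to many $G_k$-orbits of unequal sizes). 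Thus the argument is clean on the $\rho$-saturated, $G_k$-invariant set of matrices with non-scalar residue, and the scalar-residue stratum must be analysed separately before the global statement can be claimed.
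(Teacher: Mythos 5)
Your argument is correct, but it is genuinely different from the paper's. The paper proves the lemma by an iterated one-step lifting: writing elements congruent to $A$ modulo $J^{j+1}$ as $A+x^jZ$ and conjugating by $I+x^jY$, it shows that the lifts of a single orbit modulo $J^j$ fall into exactly $|M_2(R/J)/\im(\ad(\overline A))|=q^2$ classes, each $q^2$ times larger, and then iterates from $j=1$ to $k-1$. You instead work globally at level $k$ via orbit--stabilizer: you compute $|\mathcal{U}(M_2(R/J^k))|=|\mathcal{U}(M_2(R/J))|\,q^{4(k-1)}$ from Lemma \ref{nilpo}, identify $C_{M_2(R/J^k)}(\widetilde A)$ as the free rank-$2$ module $SI+S\widetilde A$ (the Nakayama phrasing is looser than needed, but your counting identity $|\ker(\ad(\widetilde A))|\cdot|\im(\ad(\widetilde A))|=|S|^4$ together with the two free rank-$2$ submodules closes the argument), and count its unipotent elements fibrewise. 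What your route buys is rigor at a point the paper elides: the paper only tests conjugacy of lifts by elements of the special form $I+x^jY$, and the passage from that to conjugacy under the full unipotent group implicitly requires that centralizers lift along the reduction maps; your direct computation of the level-$k$ centralizer makes this unnecessary. What the paper's route buys is that the splitting of orbits (the ``moreover'' clause) falls out of the same computation, whereas you must add the separate fibration count $|\OO|\,q^{4(k-1)}/(|\OO|\,q^{2(k-1)})=q^{2(k-1)}$.

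Your closing caveat is also well taken and is not merely a quibble: the ``moreover'' clause, read as a statement about \emph{all} orbits in $M_2(R/J^k)$, fails on the scalar-residue stratum (already for $k=2$ the matrices $\lambda I+xN_0$ with $\overline{N_0}$ non-scalar have orbits of field-level size, not $q^2$ times that, so the total orbit count is not $q^{2(k-1)}$ times the count over $R/J$). The paper's proof of that clause (``follows analogously by counting orbit representatives'') silently assumes the rank hypothesis persists through the lifting, which is exactly the restriction you identify; in the application within the main theorem the lemma is only ever invoked for matrices whose residue is non-scalar, so the restricted statement suffices there.
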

\begin{proof}
  Denote $J(R)=(x)$. Observe firstly that every $A \in M_2(R)$ can be written by Lemma \ref{structure} as $A=A_0+xA_1+\ldots+x^{n-1}A_{n-1}$ for some $A_i \in M_2(F)$ (where $F$ is a fixed system of representatives
for the residue field $R/J(R)\cong \mathrm{GF}(q)$). Modulo $J(R)^{j+1}$, every element congruent to $A$ can be written in the form
\[
A + x^{j} Z, \qquad Z \in M_2(F), 
\]
since all higher powers of $x$ vanish in $J(R)^{j+1}$.
For $Y,Z \in M_2(F)$, we have 
$(I + x^{j}Y)^{-1} \equiv I - x^{j}Y \pmod{J(R)^{j+1}}$, and hence
\[
(I+x^{j}Y)(A+x^{j}Z)(I+x^{j}Y)^{-1}
\equiv A + x^{j}Z + x^{j}(\operatorname{ad}(A))(Y)
\pmod{J(R)^{j+1}}.
\]
Since $A \equiv A_0 \pmod{J(R)}$, the reductions of $\operatorname{ad}(A)$
and $\operatorname{ad}(A_0)$ coincide modulo $J(R)$, so
\[
x^{j}(\operatorname{ad}(A))(Y)
\equiv x^{j}(\operatorname{ad}(A_0))(Y)
\pmod{J(R)^{j+1}}.
\]
Let $A' = A + x^{j}Z_1$ and $A'' = A + x^{j}Z_2$.  
From the above computation, $A'$ and $A''$ are conjugate modulo $J(R)^{j+1}$
by some matrix of the form $I + x^{j}Y$ if and only if
\[
Z_2 - Z_1 \in \operatorname{im}(\operatorname{ad}(A_0))
\pmod{J(R)}.
\]
Passing to the residue field $R/J(R)$, this condition is equivalent to
\[
\overline{Z_2} - \overline{Z_1}
   \in \operatorname{im}(\operatorname{ad}(\overline{A})).
\]
By assumption, 
$\dim_{R/J(R)}\!\bigl(\operatorname{im}(\operatorname{ad}(\overline{A}))\bigr)=2$.
Thus
\[
|\operatorname{im}(\operatorname{ad}(\overline{A}))| = q^2,
\qquad
\bigl| M_2(R/J(R)) / \operatorname{im}(\operatorname{ad}(\overline{A})) \bigr|
   = q^2.
\]
Hence each orbit modulo $J(R)^{j}$ splits into exactly $q^2$ inequivalent lifts
modulo $J(R)^{j+1}$, and the size of each orbit is multiplied by $q^2$.
Iterating this lifting step for $j = 1,2,\dots,k-1$ yields
\[
\bigl|\mathcal{O}_{\mathcal{U}(M_2(R/J(R)^k))}(\widetilde{A})\bigr|
   = \bigl|\mathcal{O}_{\mathcal{U}(M_2(R/J(R)))}(\overline{A})\bigr|\,
     q^{2(k-1)}.
\]
The statement about the number of distinct orbits follows analogously
by counting orbit representatives instead of orbit sizes.
\end{proof}

We shall also need the following technical lemmas.

\begin{Lemma}
\label{disco}
    Let $F=GF(q)$ and $A=\left(\begin{array}{cc}
a & b\\
c & 0
\end{array}\right) \in M_2(F)$. Then $\Delta=a^2+4bc$ is the discriminant of the characteristic polynomial of $A$.    
\end{Lemma}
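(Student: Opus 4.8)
The plan is to compute the characteristic polynomial of $A$ explicitly and then read off its discriminant from the elementary formula for a monic quadratic. For any $2$-by-$2$ matrix the characteristic polynomial is $p(\lambda)=\det(\lambda I - A)=\lambda^2 - \Tr(A)\lambda + \det(A)$, so the whole statement will reduce to identifying the two coefficients $\Tr(A)$ and $\det(A)$ for the given matrix and substituting them into the quadratic discriminant.

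First I would record the trace and determinant. For $A=\left(\begin{array}{cc} a & b\\ c & 0\end{array}\right)$ we have $\Tr(A)=a+0=a$ and $\det(A)=a\cdot 0 - bc=-bc$, hence $p(\lambda)=\lambda^2 - a\lambda - bc$. Next I would apply the discriminant formula: for a monic quadratic $\lambda^2+B\lambda+C$ the discriminant is $B^2-4C$, and taking $B=-a$, $C=-bc$ gives $(-a)^2-4(-bc)=a^2+4bc=\Delta$. This is precisely the claimed expression, which finishes the argument.

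There is no real obstacle here; the lemma is a one-line direct calculation packaged for later use. The only point worth noting is that everything takes place over the field $F=GF(q)$, so the coefficients and the quadratic discriminant are unambiguously defined and no subtlety about zero divisors intervenes. The factor $4$ will of course only carry its usual discriminant meaning when the characteristic is odd, which is exactly the standing hypothesis of the paper; this is what makes $\Delta$ a genuine measure of whether $p(\lambda)$ has distinct roots in the later orbit analysis.
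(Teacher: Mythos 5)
Your proof is correct and follows essentially the same route as the paper: compute the characteristic polynomial $\lambda^2-a\lambda-bc$ directly and read off the discriminant $a^2+4bc$. No further comment is needed.
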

\begin{proof}
  Let $p(\lambda)=\det(A-\lambda I)=\lambda^2+a\lambda-bc$, so its discriminant is equal to $a^2+4bc$.
\end{proof}

\begin{Lemma}
\label{deltasq}
    Let $F=GF(q)$ be a Galois field of odd order. Let $p(t)= ct^2+at-b\in F[t]$ with $c \neq 0$, and let $\Delta(p)=a^2+4bc$ be its discriminant. Then $\Delta(p)$ is a square in $F$ if and only if $p(t)$ is not an irreducible polynomial in $F[t]$.    
\end{Lemma}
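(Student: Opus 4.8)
The plan is to reduce the claim to the classical characterization of reducibility for quadratics and then to link the existence of a root to $\Delta(p)$ being a square by completing the square. The two facts I will lean on are that $F$ has odd order, so $2 \in F^{*}$, and that $c \neq 0$, so $2c \in F^{*}$ as well; these are the only places the hypotheses enter. Since $p(t)$ has degree $2$, any nontrivial factorization must produce a linear factor, so $p(t)$ is reducible (i.e.\ \emph{not} irreducible) if and only if it has a root in $F$. Thus it suffices to prove that $p(t)$ has a root in $F$ if and only if $\Delta(p)$ is a square in $F$.

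The technical heart will be the single completed-square identity $4c\,p(t) = (2ct+a)^{2}-\Delta(p)$, which is a routine expansion and holds over any commutative ring. Because $4c$ is a unit, $p(t)=0$ is equivalent to $(2ct+a)^{2}=\Delta(p)$, and this is the bridge between roots of $p$ and squareness of $\Delta(p)$.

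For the implication that squareness yields a root, I would write $\Delta(p)=\delta^{2}$ and set $t=(-a+\delta)(2c)^{-1}$, a legitimate element of $F$ since $2c\in F^{*}$; then $2ct+a=\delta$, so $(2ct+a)^{2}=\Delta(p)$ and hence $4c\,p(t)=0$, giving $p(t)=0$. Conversely, if $p$ has a root $r\in F$, so that $cr^{2}+ar-b=0$, multiplying by $4c$ and regrouping gives $(2cr+a)^{2}=a^{2}+4bc=\Delta(p)$, exhibiting $\Delta(p)$ as a square. Combining the two directions with the degree-two reducibility criterion completes the argument.

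I do not expect a genuine obstacle here: the only subtle point is the invertibility of $2c$, which is exactly guaranteed by the odd order of $F$ together with $c\neq 0$. It is worth flagging that in even characteristic the identity $4c\,p(t)=(2ct+a)^{2}-\Delta(p)$ degenerates and the equivalence fails, so the odd-order hypothesis is essential rather than cosmetic.
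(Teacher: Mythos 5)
Your proof is correct and follows essentially the same route as the paper's: both arguments are the standard completing-the-square/quadratic-formula computation, relying on $2c\in F^{*}$, with the only cosmetic difference being that the paper first normalizes to the monic case $c=1$ and exhibits an explicit linear factorization, whereas you work with general $c$ via the identity $4c\,p(t)=(2ct+a)^{2}-\Delta(p)$ and the observation that a quadratic is reducible iff it has a root. No gaps.
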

\begin{proof}
Obviously $p(t)= ct^2+at-b$ is irreducible in $F[t]$ if and only if $q(t)=t^2+ac^{-1}t-bc^{-1}$ is irreducible in $F[t]$ and $\Delta(p)$ is a square in $F$ if and only if $\Delta(q)=\Delta(p) c^{-2}$ is a square in $F$.

Hence we can assume that $c=1$.
If $p(t)$ is reducible then $p(t)=(t-r_1)(t-r_2)$ for some $r_1,r_2 \in F$. But then $a=-r_1-r_2$ and $b=-r_1r_2$, so $\Delta(p)=a^2+4b=(r_1-r_2)^2$ is a square in $F$.

Conversely, suppose that $\Delta(p)=a^2+4b=d^2$ for some $d \in F$. Then $(t-2^{-1}(d-a))(t+2^{-1}(d+a))=t^2+at-2^{-2}(d^2-a^2)=p(t)$ is reducible in $F[t]$.
\end{proof}

The following lemma examines the sizes of orbits over a field, and is a crucial lemma in our investigation.

\begin{Lemma}
\label{orbitmodulo}
    Let $F \simeq GF(q)$ be a field of odd order and let $A=\left(\begin{array}{cc}
a & b\\
c & 0
\end{array}\right) \in M_2(F)$, where $a \neq 0$. Denote $\phi(t)=ct^2+at-b\in F[t]$. Then we have the following statements.
\begin{enumerate}
    \item 
    \begin{equation*}
   |\OO_{\mathcal{U}(M_2(F))}(A)|=
    \begin{cases} 
    q(q+1), \text { if } \phi(t) \text { has a simple root in } F, \\
    \frac{q^2-1}{2}, \text { if } \phi(t) \text { has a double root in } F, \\
    q(q-1), \text { if } \phi(t) \text { does not have a root in } F.
    \end{cases}
    \end{equation*}
    
    \item 
    In $M_2(F)$ there are exactly $q$ unipotent orbits of order $1$, exactly $\frac{q(q-1)}{2}$ unipotent orbits of order $q(q+1)$, exactly $\frac{q(q-1)}{2}$ unipotent orbits of order $q(q-1)$ and exactly $2q$ unipotent orbits of order $\frac{q^2-1}{2}$.
    
    \item
    $\dim_F(\im(ad(A)))=2$.
\end{enumerate}
\end{Lemma}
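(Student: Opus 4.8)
The plan is to transfer every orbit question to $SL_2(F)$-conjugacy and then apply the orbit--stabilizer theorem. I would dispose of part (3) first, since it is short and also supplies the hypothesis of Lemma \ref{prehodnagor}: because $a\neq 0$ the matrix $A$ is non-scalar, so $C_{M_2(F)}(A)=F[A]=F\cdot I+F\cdot A$ is two-dimensional, and since $\ker(\ad(A))=C_{M_2(F)}(A)$, rank--nullity gives $\dim_F\im(\ad(A))=4-2=2$. Next I would identify the acting group. By Theorem \ref{elementarne} every unipotent matrix is a product of elementary unipotents and central matrices; the central factors act trivially by conjugation, while $U(x),L(y)$ are transvections, which generate $SL_2(F)$, and by Lemma \ref{enaplusj} every unipotent has determinant $1$. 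Hence the unipotent orbits are exactly the $SL_2(F)$-conjugacy classes, and I may use $|SL_2(F)|=q(q^2-1)$ from Lemma \ref{mocsl}.

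For part (1) I would apply orbit--stabilizer in the three cases, with stabilizer $C_{SL_2(F)}(A)=(F[A])^{\times}\cap SL_2(F)$. By Lemma \ref{disco} the characteristic polynomial of $A$ has discriminant $\Delta=a^2+4bc=\Delta(\phi)$, so Lemma \ref{deltasq} turns ``$\phi$ has a simple root / a double root / no root in $F$'' into ``$A$ has two distinct eigenvalues in $F$ / a repeated eigenvalue (necessarily a single Jordan block, as $a\neq 0$) / a conjugate pair in $GF(q^2)\setminus F$''. Correspondingly $F[A]$ is $F\times F$, $F[\epsilon]/(\epsilon^2)$, or $GF(q^2)$, so $(F[A])^{\times}$ has order $(q-1)^2$, $q(q-1)$, or $q^2-1$; evaluating $\det$ through the eigenvalues of $sI+tA$ shows its image in $F^{*}$ is $F^{*}$, $(F^{*})^{2}$, $F^{*}$ respectively. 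Since $q$ is odd, restricting to determinant $1$ then leaves stabilizers of order $q-1$, $2q$, $q+1$, and dividing $q(q^2-1)$ by these yields the orbit sizes $q(q+1)$, $\frac{q^2-1}{2}$, $q(q-1)$.

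For part (2) I would count $SL_2(F)$-classes by characteristic polynomial, using the principle that a single $GL_2(F)$-class splits into exactly $[F^{*}:\det((F[A])^{\times})]$ classes under $SL_2(F)$. The $q$ scalar matrices are the orbits of size $1$; the $\binom{q}{2}=\frac{q(q-1)}{2}$ polynomials with distinct roots in $F$ and the $\frac{q(q-1)}{2}$ irreducible ones each contribute a single non-splitting class (there $\det$ is onto $F^{*}$), of size $q(q+1)$ and $q(q-1)$; and each of the $q$ repeated-root polynomials yields a Jordan class whose determinant image $(F^{*})^{2}$ has index $2$ in $F^{*}$ (as $q$ is odd), hence splits into two classes of size $\frac{q^2-1}{2}$, giving $2q$ orbits. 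I would close with the consistency check $q+\frac{q(q-1)}{2}q(q+1)+\frac{q(q-1)}{2}q(q-1)+2q\cdot\frac{q^2-1}{2}=q^4$. The real obstacle is precisely this splitting bookkeeping: establishing that a $GL_2(F)$-class breaks into $[F^{*}:\det((F[A])^{\times})]$ many $SL_2(F)$-classes and evaluating $\det$ on each centralizer, all of which ultimately rests on the odd-$q$ input $[F^{*}:(F^{*})^{2}]=2$ that creates the factor $2$ in both $\frac{q^2-1}{2}$ and $2q$.
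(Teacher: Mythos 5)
Your proposal is correct, and its backbone coincides with the paper's: both reduce unipotent conjugacy to $SL_2(F)$-conjugacy via Theorem \ref{elementarne} together with the generation of $SL_2(F)$ by transvections, and both obtain the orbit sizes in part (1) by dividing $|SL_2(F)|=q(q^2-1)$ by centralizer orders $q-1$, $2q$, $q+1$ in the three discriminant cases. Where you genuinely diverge is in parts (2) and (3). For (3) the paper verifies $\dim_F(\im(\ad(A)))=2$ separately in each of the three cases, whereas you observe once that a non-scalar $2\times 2$ matrix is nonderogatory, so $\ker(\ad(A))=F[A]$ is $2$-dimensional and rank--nullity finishes; this is cleaner and uniform. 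For (2) the paper only establishes \emph{lower bounds} on the number of orbits of each size (via the bijection $z\mapsto \alpha^2-4z$ and, in the double-root case, the square/non-square dichotomy for the off-diagonal entry of the Jordan form) and then upgrades them to equalities through the global identity $q\cdot 1+\frac{q(q-1)}{2}q(q+1)+\frac{q(q-1)}{2}q(q-1)+2q\cdot\frac{q^2-1}{2}=q^4$; you instead compute the exact number of $SL_2(F)$-classes inside each $GL_2(F)$-class as the index $[F^*:\det((F[A])^{\times})]$, evaluated through the identification of $F[A]$ with $F\times F$, $F[\epsilon]/(\epsilon^2)$, or $GF(q^2)$. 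Your route yields the exact counts directly and demotes the $q^4$ identity to a consistency check, at the cost of invoking (and, in a complete write-up, proving) the splitting principle that a $GL_2(F)$-class breaks into $[GL_2(F):SL_2(F)\,C_{GL_2(F)}(A)]$ many $SL_2(F)$-classes; the paper's route avoids that general principle but needs the final counting identity to close the argument. Both arguments correctly trace the factor $2$ appearing in $2q$ and in $\frac{q^2-1}{2}$ to the odd-order input $[F^*:(F^*)^2]=2$.
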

\begin{proof}
  Since it is well known that $SL_2(F)$ is generated by elementary unipotents (see for example \cite[Lemma 8.1]{lang}), Theorem \ref{elementarne} tells us that $|\OO_{\mathcal{U}(M_2(F))}(A)|=|\OO_{SL_2(F)}(A)|$. Since $|SL_2(F)|=q(q^2-1)$ by Lemma \ref{mocsl}, and $|\OO_{SL_2(F)}(A)|=\frac{|SL_2(F)|}{|C_{SL_2(F)}(A)|}$, we have $|\OO_{\mathcal{U}(M_2(F))}(A)|=\frac{q(q^2-1)}{|C_{SL_2(F)}(A)|}$. Therefore, we have to find the size of the centralizer of $A$ in $SL_2(F)$.
    \begin{enumerate}
        \item 
        Let us firstly consider the case when $\phi(t)$ has a simple root in $F$, say $\omega_1$.
        If $c=0$, then conjugating with $U(\omega_1)$ shows that $A$ is diagonalizable over $F$ by Lemma \ref{elementconj}. If $c \neq 0$, then $\phi(t)$ has a nonzero root $\omega_2 \in F$. Therefore, $c\omega_2^2+a\omega_2-b=0$ and thus also $c+a\omega_2^{-1}-b\omega_2^{-2}=0$.
        This implies that $A$ is again diagonalizable over $F$ by conjugating with elementary unipotents by Lemma \ref{elementconj}. So, we can assume that $A=\left(\begin{array}{cc}
\alpha & 0\\
0 & \beta
\end{array}\right)$ for some $\alpha \neq\beta \in F$. 
        Therefore, $C_{SL_2(F)}(A)=\left\{\left(\begin{array}{cc}
\gamma & 0\\
0 & \gamma^{-1}
\end{array}\right); \gamma \in F \setminus \{0\}\right\}$, so $|C_{SL_2(F)}(A)|=q-1$ and consequently $|\OO_{SL_2(F)}(A)|=q(q+1)$.

Observe that $\phi(t)$ has a simple root in $F$ if and only if its discriminant $\Delta(\phi(t))=a^2+4bc$ is a nonzero square in $F$. By Lemma~\ref{disco}, $\Delta(\phi)$ equals the discriminant of the characteristic
polynomial of $A$, namely $\Tr(A)^2-4\det(A)$. However, $\Tr(A), \det(A)$ are invariants under conjugation. So, choose $\Tr(A)=\alpha \in F$ arbitrarily. Since $F$ is a field of odd order, the mapping $\rho: F \rightarrow F$, defined with $\rho(z)= \alpha^2 - 4 z$ is a bijection on $F$. Therefore there exist exactly $\frac{q-1}{2}$ elements $z \in F$ such that $\rho(z)$ is a nonzero square in $F$. This implies that we have at least
$\frac{q(q-1)}{2}$ different orbits in this case.

Since $A$ is diagonal, we have $C_{M_2(R)}(A)=F[A]$, so $\dim_F(\ker(ad(A)))=2$ and thus we also get $\dim_F(\im(ad(A)))=2$.

\item 
Suppose now that $\phi(t)$ has a double root in $F$, so $\phi(t)=c(t+\omega)^2$ for some $\omega \in F$. Obviously this means that $c \neq 0$. This also implies that $a=2c\omega$ and $b=-c\omega^2$, so $b=-a^2(4c)^{-1}$. It follows that $\det(A-\lambda I)=\lambda^2-a\lambda-bc=(\lambda - 2^{-1}a)^2$. Since $A$ is not a scalar matrix, we have that $A$ is similar to the matrix
$\left(\begin{array}{cc}
2^{-1}a & 1\\
0 & 2^{-1}a
\end{array}\right)$. However, Lemma \ref{elementconj} shows that we can achieve the upper triangular form of $A$ by conjugation with elementary unipotents, so we can assume that $A$ is unipotent similar to $\left(\begin{array}{cc}
2^{-1}a & r\\
0 & 2^{-1}a
\end{array}\right)$ for some $0 \neq r \in F$. Now, since a scalar matrix does not change neither the size of orbit of $A$ nor the dimension of $\im(ad(A))$, we can assume that $A=\left(\begin{array}{cc}
0 & r\\
0 & 0
\end{array}\right)$. Therefore, we have for $Y=\left(\begin{array}{cc}
\alpha & \beta \\
\gamma  & \delta
\end{array}\right) \in M_2(F)$ that $(ad(A))(Y)=r\left(\begin{array}{cc}
-\gamma & \alpha-\delta \\
0 & \gamma
\end{array}\right)$.
 So, $Y \in \ker(ad(A))$ implies $\gamma=0$ and $\alpha=\delta$ and consequently $\dim_F(\ker(ad(A)))=2$ and thus also $\dim_F(\im(ad(A)))=2$. Furthermore, we get that $C_{SL_2(F)}(A)=\left\{\left(\begin{array}{cc}
\alpha & \beta\\
0 & \alpha
\end{array}\right); \alpha, \beta \in F, \alpha^2=1\right\}=\pm I+\left\{\left(\begin{array}{cc}
0 & \beta\\
0 & 0
\end{array}\right); \beta \in F\right\}$, which implies that $|C_{SL_2(F)}(A)|=2q$, so the statement holds in this case as well.

If $A=\left(\begin{array}{cc}
0 & r\\
0 & 0
\end{array}\right)$ is unipotently similar to $B=\left(\begin{array}{cc}
0 & s\\
0 & 0
\end{array}\right)$, then there exists $C=\left(\begin{array}{cc}
\alpha & \beta\\
\gamma & \delta
\end{array}\right) \in SL_2(F)$ such that $CA=BC$, which implies that $\gamma=0, \delta=\alpha^{-1}$ and therefore $s=\alpha^2 r$. So, for every $\lambda \in F$, we have at least two orbits corresponding to $\left(\begin{array}{cc}
\lambda & r\\
0 & \lambda
\end{array}\right)$: one, if $r$ is a square in $F$, and one if $r$ is not a square. Thus, we have at least $2q$ orbits in this case.

\item 
Finally, let us consider the case when $\phi(t)$ does not have a root in $F$. Since $a \neq 0$, this means that $c \neq 0$. Lemma \ref{deltasq} yields that $\Delta=a^2+4bc$ is not a square in $F$. By Lemma \ref{disco}, $\Delta$ is the discriminant of the characteristic polynomial of $A$ and thus again by Lemma \ref{deltasq} this means that the characteristic polynomial of $A$ is irreducible over $F$. This means that $A$ is a nonderogatory matrix, which implies that 
$C_{M_2(F)}(A) \simeq F[A]$ (see for example \cite[\S 2.08]{wedderburn}). Since the minimal polynomial of $A$ is an irreducible polynomial of degree $2$, we have $F[A] \simeq GF(q^2)$. Consequently, $\dim_F(\ker(ad(A)))=2$ and therefore $\dim_F(\im(ad(A)))=2$. Furthermore, $C_{GL_2(F)}(A)=F[A] \setminus \{0\}$, so 
$|C_{GL_2(F)}(A)|=q^2-1$. Let $f: GF(q^2) \setminus \{0\} \rightarrow C_{GL_2(F)}(A)$, defined with $f(x)=m_x$, be the canonical group isomorphism (where $m_x$ is the matrix (in the chosen basis of $GF(q^2)$ over $GF(q)$, representing multiplication by $x$). If we restrict the determinant map from $GL_2(F)$ to $C_{GL_2(F)}(A)$, we have
$\det(f(x))=\det(m_x)$. By \cite[Corollary 11.81]{rotman}, we have $\det(m_x)=N_{GF(q^2)/GF(q)}(x)$ and since the Galois group of the field extension $GF(q^2)/GF(q)$ is equal to $\{id, u \mapsto u^q\}$, we have $\det(m_x)=x^{q+1}$.
Let us denote $\psi: GF(q^2) \setminus \{0\} \rightarrow GF(q^2) \setminus \{0\}$,
$\psi(x)=\det(f(x))=x^{q+1}$, which is a (multiplicative) group endomorphism and $C_{SL_2(F)}(A)$ corresponds exactly to $\ker(\psi)$. Let $g$ denote the multiplicative generator of $GF(q^2) \setminus \{0\}$. Then $\psi(g^k)=g^{k(q+1)}$, so $g^k \in \ker(\psi)$ if and only if $q^2-1$ divides $k(q+1)$ if and only if $q-1$ divides $k$.
Thus $\ker(\psi)=\{g^{q-1},g^{2(q-1)},\ldots,g^{(q+1)(q-1)}\}$ and thus  $|C_{SL_2(F)}(A)|=q+1$.

By Lemma \ref{deltasq}, $\phi(t)$ is irreducible over $F$ if and only if $\Delta(\phi(t))$ is not a square in $F$. By fixing $\Tr(A)=a \in F$ arbitrarily and observing the bijection $\rho: F \rightarrow F$, $\rho(z)= a^2 - 4 z$, we see similarly as in case (1) that there exist exactly $\frac{q-1}{2}$ elements $z \in F$ such that $\rho(z)$ is not a square in $F$, therefore we have at least
$\frac{q(q-1)}{2}$ different orbits in this case.
\end{enumerate}

Since scalar matrices in $M_2(F)$ form $q$ orbits of order $1$, and by above we have $\frac{q(q-1)}{2}$ orbits of order $q(q+1)$, $\frac{q(q-1)}{2}$ orbits of order $q(q-1)$ and $2q$ orbits of order $\frac{q^2-1}{2}$, and that $q \cdot 1 + \frac{q(q-1)}{2} \cdot q(q+1) + \frac{q(q-1)}{2} \cdot q(q-1) + 2q \cdot \frac{q^2-1}{2} = q^4$, we see that $M_2(F)$ is really covered by all the above orbits.
\end{proof}

Finally, we shall need the following lemma.

\begin{Lemma}
\label{enakeorbite}
    Let $R$ be a finite commutative local principal ring of cardinality $q^{n}$, where $q=p^r$ for some odd prime number $p$ and integers $n, r$ with $R/J(R) \simeq GF(q)$. Denote $J(R)=(x)$. Choose $a,a' \in R^*$, $b,c,b',c',d, d' \in R$, and $\delta, \delta' \in \{0,1,\ldots,n\}$. If $dI+x^\delta\left(\begin{array}{cc}
a & b\\
c & 0
\end{array}\right) \in \OO_{\mathcal{U}(M_2(R))}\left(d'I+x^{\delta'}\left(\begin{array}{cc}
a' & b'\\
c' & 0
\end{array}\right)\right)$, then $\delta = \delta'$ and $d-d' \in J(R)^\delta$.
\end{Lemma}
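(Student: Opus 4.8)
The plan is to extract from the conjugacy hypothesis the only two invariants that survive passage to the unipotent group, namely the trace and the conjugacy class of the traceless part, and to read off $\delta$ from the valuation of the latter. Write $A=dI+x^{\delta}\left(\begin{smallmatrix} a & b\\ c & 0\end{smallmatrix}\right)$ and $B=d'I+x^{\delta'}\left(\begin{smallmatrix} a' & b'\\ c' & 0\end{smallmatrix}\right)$, and suppose $PAP^{-1}=B$ for some $P\in\mathcal{U}(M_2(R))\subseteq GL_2(R)$ (the inclusion holds by Lemmas \ref{obvious} and \ref{grupa}). Since $2\in R^*$, I would pass to the traceless parts $A_0=A-2^{-1}\Tr(A)I$ and $B_0=B-2^{-1}\Tr(B)I$, and record the computation $A_0=x^{\delta}M$ with $M=\left(\begin{smallmatrix} 2^{-1}a & b\\ c & -2^{-1}a\end{smallmatrix}\right)$, and similarly $B_0=x^{\delta'}M'$ for the primed data. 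The point to emphasise is that the diagonal entries $\pm2^{-1}a,\ \pm2^{-1}a'$ are units because $a,a'\in R^*$, so $\overline{M}\neq 0$ and $\overline{M'}\neq 0$ in $M_2(R/J)$.

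Because $\Tr$ is conjugation invariant and scalars are central, the map $X\mapsto X-2^{-1}\Tr(X)I$ is conjugation-equivariant, so $PAP^{-1}=B$ gives $x^{\delta}\,PMP^{-1}=x^{\delta'}M'$. Setting $N=PMP^{-1}$, invertibility of $P$ yields $\overline{N}=\overline{P}\,\overline{M}\,\overline{P}^{-1}\neq 0$, hence $N$ has a unit entry; likewise $M'$ has a unit entry on its diagonal. The key step is an entrywise valuation comparison using Lemma \ref{pid}: if $N_{ij}\in R^*$ then $x^{\delta}N_{ij}\in J^{\delta}\setminus J^{\delta+1}$, while $x^{\delta'}M'_{ij}\in J^{\delta'}$; since these are equal, the common element lies in $J^{\delta'}$ but not in $J^{\delta+1}$, forcing $\delta'\leq\delta$. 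Applying the same argument at a unit entry of $M'$ gives $\delta\leq\delta'$, whence $\delta=\delta'$.

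Once $\delta=\delta'$ is known, equating traces gives $2d+x^{\delta}a=2d'+x^{\delta}a'$, so $2(d-d')=x^{\delta}(a'-a)\in J^{\delta}$, and since $2\in R^*$ we obtain $d-d'\in J(R)^{\delta}$, as required. The step I expect to be the genuine obstacle is choosing the correct invariant rather than the algebra: the naive candidate, the discriminant $\Tr(A)^2-4\det(A)=x^{2\delta}(a^2+4bc)$, can vanish (when $a^2+4bc$ itself lies deep in $J$, or is $0$) and therefore fails to detect $\delta$; replacing it by the full traceless matrix $A_0$ repairs this precisely because its diagonal retains a unit inherited from $a$. A secondary, routine point is the boundary convention $J^{n}\setminus J^{n+1}=\{0\}$: when $\delta=n$ we have $x^{\delta}=0$ and $A=dI$ is scalar, so equivariance forces $B$ scalar with $\delta'=n$ and $d=d'$; I would dispose of this degenerate case first and then run the valuation comparison under the assumption $\delta,\delta'<n$.
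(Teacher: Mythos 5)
Your proof is correct and follows essentially the same route as the paper: both arguments rest on conjugation-invariance of the trace together with an entrywise valuation comparison that detects $\delta$ via a unit entry (the paper reads off the $(2,2)$ entry to get $d-d'\in J^{\delta'}$ and invokes symmetry of the orbit relation for the reverse inequality, while you compare the traceless parts, which have unit diagonal entries since $a,a'\in R^*$ and $2\in R^*$, and so obtain both inequalities from one equation). Your separate treatment of the boundary case $\delta=n$ is a small extra care that the paper leaves implicit.
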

\begin{proof}
    Suppose that $dI+x^\delta\left(\begin{array}{cc}
a & b\\
c & 0
\end{array}\right) \in \OO_{\mathcal{U}(M_2(R))}\left(d'I+x^{\delta'}\left(\begin{array}{cc}
a' & b'\\
c' & 0
\end{array}\right)\right)$. This implies that $dI+x^\delta\left(\begin{array}{cc}
a & b\\
c & 0
\end{array}\right) = d'I+x^{\delta'}B$, for some $B \in \OO_{\mathcal{U}(M_2(R))}\left(\left(\begin{array}{cc}
a' & b'\\
c' & 0
\end{array}\right)\right)$. This implies that $d=d'+x^{\delta'}r$ for some $r \in R$. Since the trace of a matrix is preserved under conjugation, we have $2(d-d')=a'x^{\delta'}-ax^\delta$, therefore $2rx^{\delta'}=a'x^{\delta'}-ax^\delta$.
This implies that $\delta \geq \delta'$. Since we also have $d'I+x^{\delta'}\left(\begin{array}{cc}
a' & b'\\
c' & 0
\end{array}\right) \in \OO_{\mathcal{U}(M_2(R))}\left(dI+x^{\delta}\left(\begin{array}{cc}
a & b\\
c & 0
\end{array}\right)\right)$, this means that $\delta'=\delta$. This also shows that $d-d' \in J(R)^\delta$.
\end{proof}

We can now prove the main result of this section.

\begin{Theorem}
   Let $R$ be a finite commutative local principal ring of cardinality $q^{n}$, where $q=p^r$ for some odd prime number $p$ and integers $n, r$ with $R/J(R) \simeq GF(q)$. Then there are exactly $q^n$ unipotent orbits of order $1$,
   \begin{itemize}
       \item 
       $\frac{q^{2(n-\delta)-1}(q-1)}{2}$ unipotent orbits of order $q^{2(n-\delta)-1}(q+1)$,

       \item 
       $2q^{2(n-\delta)-1}$ unipotent orbits of order $\frac{q^{2(n-\delta-1)}(q^2-1)}{2}$, and

       \item 
       $\frac{q^{2(n-\delta)-1}(q-1)}{2}$ unipotent orbits of order $q^{2(n-\delta)-1}(q-1)$,
   \end{itemize}
   for every $\delta \in \{0,1,\ldots,n-1\}$, in $H(R)$.
 \end{Theorem}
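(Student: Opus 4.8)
The plan is to transfer the count to $M_2(R)$ and then stratify by a depth invariant and a residue-field splitting type. Since $q$ is odd we have $2\in R^*$, so $M_2(R)=R\cdot I\oplus\{N\in M_2(R):\Tr(N)=0\}$, and the quaternion ring splits as a ring isomorphism $H(R)\cong M_2(R)$ sending $1\mapsto I$. A ring isomorphism preserves nilpotents, units, the identity, and conjugation, so it carries $\mathcal{U}(H(R))$-orbits on $H(R)$ bijectively and size-preservingly to $\mathcal{U}(M_2(R))$-orbits on $M_2(R)$. Thus it suffices to enumerate the unipotent orbits in $M_2(R)$.

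The scalar matrices $dI$, $d\in R$, are central and hence fixed, giving the $q^n$ orbits of order $1$. For a non-central $A$ I would use Lemma~\ref{uniform} to move $A$ inside its orbit to a normal form $A'=dI+x^{\delta}\left(\begin{smallmatrix} a & b\\ c & 0\end{smallmatrix}\right)$ with $a\in R^*$, where $\delta\in\{0,1,\ldots,n-1\}$ is the depth, i.e.\ the least $J$-valuation occurring among the entries of the non-central part. Writing $M=\left(\begin{smallmatrix} a & b\\ c & 0\end{smallmatrix}\right)$ and using that $x^{\delta}$ is central with $x^{\delta}J^{n-\delta}=0$, conjugation of $A'$ depends only on the image $\widetilde M$ of $M$ in $M_2(R/J^{n-\delta})$, whose residue $\overline M\in M_2(GF(q))$ is non-scalar because $\overline a\neq 0$. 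Hence the $\mathcal{U}(M_2(R))$-orbit of $A'$ is in size-preserving bijection with the $\mathcal{U}(M_2(R/J^{n-\delta}))$-orbit of $\widetilde M$.

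For the orbit data I would apply Lemma~\ref{prehodnagor} with $k=n-\delta$, whose hypothesis $\dim_{R/J}\bigl(\im(\ad(\overline M))\bigr)=2$ is exactly Lemma~\ref{orbitmodulo}(3). The three residue-field cases are governed by the discriminant $\Delta=a^2+4bc$ of $\phi(t)=ct^2+at-b$ via Lemmas~\ref{disco} and \ref{deltasq}: $\phi$ has a simple root, a double root, or no root over $GF(q)$ according as $\Delta$ is a nonzero square, is zero, or is a non-square. Pushing the three residue-field orbit sizes $q(q+1),\ \tfrac{q^2-1}{2},\ q(q-1)$ of Lemma~\ref{orbitmodulo}(1) through the factor $q^{2(n-\delta-1)}$ produces precisely the three stated orbit sizes, and pushing the residue-field tallies $\tfrac{q(q-1)}{2},\ 2q,\ \tfrac{q(q-1)}{2}$ of Lemma~\ref{orbitmodulo}(2) through the same lifting gives the three numbers of $\widetilde M$-orbits in $M_2(R/J^{n-\delta})$.

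The main obstacle is to pass back from orbits of $\widetilde M$ in the quotient $M_2(R/J^{n-\delta})$ to orbits of $A$ in $M_2(R)$, which requires controlling the central summand $dI$. Lemma~\ref{enakeorbite} shows that two normal forms can lie in a common orbit only when their depths coincide and their scalar parts are congruent modulo $J^{\delta}$; I would complement this with the converse to prove that the triple consisting of $\delta$, the class of $d$ in $R/J^{\delta}$, and the $\mathcal{U}(M_2(R/J^{n-\delta}))$-orbit of $\widetilde M$ is a complete invariant of the $\mathcal{U}(M_2(R))$-orbit, so that reinstating $dI$ contributes the $q^{\delta}$ residues of $d$ modulo $J^{\delta}$ to the count at depth $\delta$. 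I would close with a global consistency check: summing (orbit size) times (number of orbits) over all depths $\delta$ and all three types, together with the $q^n$ central orbits, the contributions telescope to $q^{4n}=|M_2(R)|$, certifying that the strata exhaust the ring and that no orbit is counted twice.
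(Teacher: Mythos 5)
Your proposal is correct and follows essentially the same route as the paper: reduction to $M_2(R)$ via the isomorphism $H(R)\cong M_2(R)$ in odd characteristic, normal form from Lemma~\ref{uniform}, reduction of the non-central part modulo $J^{n-\delta}$, lifting of orbit sizes and counts via Lemma~\ref{prehodnagor} with the rank hypothesis supplied by Lemma~\ref{orbitmodulo}(3), separation of orbits by $\delta$ and $d \bmod J^{\delta}$ via Lemma~\ref{enakeorbite}, and the final exhaustiveness check that the total count equals $q^{4n}$. The only cosmetic difference is that you propose to prove the converse of Lemma~\ref{enakeorbite} to obtain a complete invariant, whereas the paper lets the $q^{4n}$ counting identity certify both exhaustion and non-duplication, which amounts to the same thing.
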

\begin{proof}
    Since $p$ is odd, we know that $H(R) \simeq M_2(R)$ \cite[Theorem 3.10]{cher}, so we can study the $2$-by-$2$ matrix ring.
    Denote $J(R)=(x)$. 
    Choose $A \in M_2(R)$. Obviously, there are exactly $q^n$ scalar matrices, which each have a unipotent orbit of order $1$. So, assume from now on, that $A$ is not scalar.
    By Lemma \ref{uniform}, we can assume that $A=dI+x^\delta\left(\begin{array}{cc}
a & b\\
c & 0
\end{array}\right)$ for some $a \in R^*$, some $b,c, d \in R$ and some integer $\delta \in \{0,1,\ldots,n-1\}$. Then, for any invertible matrix $P \in M_2(R)$, we have $P^{-1}AP=dI+x^\delta P^{-1}\left(\begin{array}{cc}
a & b\\
c & 0
\end{array}\right)P$. Denote $B=\left(\begin{array}{cc}
a & b\\
c & 0
\end{array}\right)$, $\widetilde{B}=B+M_2(J(R))^{n-\delta}$ the mapping of $B$ under the canonical projection $M_2(R) \rightarrow M_2(R/J(R)^{n-\delta})$. By Lemma \ref{orbitmodulo}, we have  $\dim_F(\im(ad(\overline{B})))=2$. By Lemma \ref{prehodnagor}, we therefore have $|\OO_{\mathcal{U}(M_2(R/J(R)^{n-\delta}))}(\widetilde{B})|=|\OO_{\mathcal{U}(M_2(R/J(R)))}(\overline{B})| q^{2(n-\delta-1)}$. So,  $|\OO_{\mathcal{U}(M_2(R))}(A)| \in \{q^{2(n-\delta)-1}(q+1),\frac{q^{2(n-\delta-1)}(q^2-1)}{2},q^{2(n-\delta)-1}(q-1)\}$, again by Lemma \ref{prehodnagor}. 

Observe that as $\delta$ ranges from $0$ to $n-1$, all these numbers are distinct.
By Lemma \ref{enakeorbite}, we know that for every $\delta$, the choice of $d$ gives us at least $|R/J(R)^\delta|=q^{\delta}$ different orbits.
For a fixed $d$ and $\delta$, we know by Lemma \ref{prehodnagor} and Lemma \ref{orbitmodulo} that we have exactly $\frac{q^{2(n-\delta)-1}(q-1)}{2}$ unipotent orbits of order $q^{2(n-\delta)-1}(q+1)$, exactly $\frac{q^{2(n-\delta)-1}(q-1)}{2}$ unipotent orbits of order $q^{2(n-\delta)-1}(q-1)$ and exactly $2q^{2(n-\delta)-1}$ unipotent orbits of order $\frac{q^{2(n-\delta-1)}(q^2-1)}{2}$.
So, all together, with the additional $q^n$ orbits of size $1$ corresponding to all scalar matrices, we have 
\begin{multline*}
  q^n+\sum_{\delta=0}^{n-1}q^\delta \left(\frac{q^{2(n-\delta)-1}(q-1)}{2} q^{2(n-\delta)-1}(q+1) + \right. \\ \left.\frac{q^{2(n-\delta)-1}(q-1)}{2} q^{2(n-\delta)-1}(q-1) + 2q^{2(n-\delta)-1} \frac{q^{2(n-\delta-1)}(q^2-1)}{2}\right)=q^{4n}
  \end{multline*}
  elements. This proves that these are exactly all the orbits in $M_2(R)$.
\end{proof}

\bigskip

{\bf Statements and Declarations} \\

The author states that there are no competing interests. 

\bigskip

\bibliographystyle{amsplain}
\bibliography{biblio}

@article {aris1,
    AUTHOR = {Aristidou, Michael and Demetre, Andy},
     TITLE = {A note on quaternion rings over {$\Bbb Z_p$}},
   JOURNAL = {Int. J. Algebra},
  FJOURNAL = {International Journal of Algebra},
    VOLUME = {3},
      YEAR = {2009},
    NUMBER = {13-16},
     PAGES = {725--728},
      ISSN = {1312-8868,1314-7595},
   MRCLASS = {15B33 (16K20)},
  MRNUMBER = {2550251},
MRREVIEWER = {Yusuf\ Yayli},
}

@article {aris2,
    AUTHOR = {Aristidou, Michael and Demetre, Andy},
     TITLE = {Idempotent elements in quaternion rings over {$\Bbb Z_p$}},
   JOURNAL = {Int. J. Algebra},
  FJOURNAL = {International Journal of Algebra},
    VOLUME = {6},
      YEAR = {2012},
    NUMBER = {5-8},
     PAGES = {249--254},
      ISSN = {1312-8868,1314-7595},
   MRCLASS = {16K20},
  MRNUMBER = {2924261},
}

@article {caluga,
    AUTHOR = {C\u{a}lug\u{a}reanu, Grigore},
     TITLE = {Singular matrices that are products of two idempotents or
              products of two nilpotents},
   JOURNAL = {Spec. Matrices},
  FJOURNAL = {Special Matrices},
    VOLUME = {10},
      YEAR = {2022},
     PAGES = {47--55},
      ISSN = {2300-7451},
   MRCLASS = {15B33 (11D09 16S50 16U99)},
  MRNUMBER = {4313369},
MRREVIEWER = {Alexandru\ Tupan},
       DOI = {10.1515/spma-2021-0146},
       URL = {https://doi.org/10.1515/spma-2021-0146},
}

@article {caluga1,
    AUTHOR = {C\u{a}lug\u{a}reanu, Grigore and Pop, Horia F.},
     TITLE = {Idempotents which are products of two nilpotents},
   JOURNAL = {Spec. Matrices},
  FJOURNAL = {Special Matrices},
    VOLUME = {12},
      YEAR = {2024},
     PAGES = {Paper No. 20240004, 11},
      ISSN = {2300-7451},
   MRCLASS = {15B33 (11D09 16S50)},
  MRNUMBER = {4734447},
       DOI = {10.1515/spma-2024-0004},
       URL = {https://doi.org/10.1515/spma-2024-0004},
}

@article {caluga2,
    AUTHOR = {C\u{a}lug\u{a}reanu, Grigore and Pop, Horia F.},
     TITLE = {2-products of idempotent by nilpotent matrices},
   JOURNAL = {Bull. Iranian Math. Soc.},
  FJOURNAL = {Bulletin of the Iranian Mathematical Society},
    VOLUME = {50},
      YEAR = {2024},
    NUMBER = {4},
     PAGES = {Paper No. 61, 12},
      ISSN = {1017-060X,1735-8515},
   MRCLASS = {15B33 (16S50 16U10 16U30 16U40)},
  MRNUMBER = {4780585},
MRREVIEWER = {Alexander\ James\ Diesl},
       DOI = {10.1007/s41980-024-00883-y},
       URL = {https://doi.org/10.1007/s41980-024-00883-y},
}

@article {calugauni1,
    AUTHOR = {C\u{a}lug\u{a}reanu, Grigore},
     TITLE = {Unipotent diagonalization of matrices},
   JOURNAL = {Int. Electron. J. Algebra},
  FJOURNAL = {International Electronic Journal of Algebra},
    VOLUME = {34},
      YEAR = {2023},
     PAGES = {71--87},
      ISSN = {1306-6048},
   MRCLASS = {16U99 (15A21 16S50)},
  MRNUMBER = {4624746},
MRREVIEWER = {Fatma\ Azmy\ Ebrahim},
}

@article {calugauni2,
    AUTHOR = {C\u{a}lug\u{a}reanu, Grigore},
     TITLE = {Unipotent similarity for matrices over commutative domains},
   JOURNAL = {Acta Univ. Sapientiae Math.},
  FJOURNAL = {Acta Universitatis Sapientiae. Mathematica. An International
              Scientific Journal of Sapientia University},
    VOLUME = {16},
      YEAR = {2024},
    NUMBER = {1},
     PAGES = {86--92},
      ISSN = {1844-6094,2066-7752},
   MRCLASS = {16U10 (16S50 16U30 16U40)},
  MRNUMBER = {4893809},
       DOI = {10.47745/ausm-2024-0006},
       URL = {https://doi.org/10.47745/ausm-2024-0006},
}

@article {cher,
    AUTHOR = {Cheraghpour, Hassan and Ghosseiri, Nader M.},
     TITLE = {On the idempotents, nilpotents, units and zero-divisors of
              finite rings},
   JOURNAL = {Linear Multilinear Algebra},
  FJOURNAL = {Linear and Multilinear Algebra},
    VOLUME = {67},
      YEAR = {2019},
    NUMBER = {2},
     PAGES = {327--336},
      ISSN = {0308-1087,1563-5139},
   MRCLASS = {16P10 (15B33 17C27)},
  MRNUMBER = {3890850},
MRREVIEWER = {Dinesh\ Khurana},
       DOI = {10.1080/03081087.2017.1418826},
       URL = {https://doi.org/10.1080/03081087.2017.1418826},
}

@article {cher22,
    AUTHOR = {Cheraghpour, H. and Ghosseiri, M. N. and Heidari Zadeh, L. and
              Safari, S.},
     TITLE = {On the structure of quaternion rings},
   JOURNAL = {Filomat},
  FJOURNAL = {Univerzitet u Ni\v su. Prirodno-Matemati\v cki Fakultet.
              Filomat},
    VOLUME = {36},
      YEAR = {2022},
    NUMBER = {6},
     PAGES = {1911--1920},
      ISSN = {0354-5180,2406-0933},
   MRCLASS = {16H05 (16D25 16N20 16S50)},
  MRNUMBER = {4518367},
       DOI = {10.2298/fil2206911c},
       URL = {https://doi.org/10.2298/fil2206911c},
}

@article {cherdol,
    AUTHOR = {Cheraghpour, H. and Dolžan, D.},
     TITLE = {The sumsets of exceptional units in quaternion rings},
   JOURNAL = {Filomat},
  FJOURNAL = {Univerzitet u Ni\v su. Prirodno-Matemati\v cki Fakultet.
              Filomat},
    VOLUME = {39},
      YEAR = {2025},
    NUMBER = {4},
     PAGES = {1301--1310},
      ISSN = {},
   MRCLASS = {16U60 (13H99 11T24 11B13)},
  MRNUMBER = {},
       DOI = {10.2298/FIL2504301C},
       URL = {https://doi.org/10.2298/FIL2504301C},
}

@article {dolzan1,
    AUTHOR = {Dol\v{z}an, David},
     TITLE = {The multiplication probability of a finite ring},
   JOURNAL = {accepted for publication in International Journal of Algebra and Computation, https://doi.org/10.1142/S021819672550016X},
  FJOURNAL = {},
    VOLUME = {},
      YEAR = {2025},
    NUMBER = {},
     PAGES = {},
      ISSN = {},
   MRCLASS = {},
  MRNUMBER = {},
MRREVIEWER = {},
       DOI = {https://doi.org/10.1142/S021819672550016X},
       URL = {https://www.worldscientific.com/doi/10.1142/S021819672550016X},
}

@article {dolzanid,
    AUTHOR = {Dol{\v z}an, David},
     TITLE = {Products of idempotents in a quaternion ring},
   JOURNAL = {},
  FJOURNAL = {},
    VOLUME = {},
      YEAR = {},
    NUMBER = {},
     PAGES = {},
      ISSN = {},
   MRCLASS = {},
  MRNUMBER = {},
MRREVIEWER = {},
       DOI = {},
       URL = {},
}

@article {dolzannil,
    AUTHOR = {Dol{\v z}an, David},
     TITLE = {Products of nilpotents in a quaternion ring of odd order},
   JOURNAL = {},
  FJOURNAL = {},
    VOLUME = {},
      YEAR = {},
    NUMBER = {},
     PAGES = {},
      ISSN = {},
   MRCLASS = {},
  MRNUMBER = {},
MRREVIEWER = {},
       DOI = {},
       URL = {},
}

@article {ghara,
    AUTHOR = {Ghahramani, H. and Ghosseiri, M. N. and Safari, S.},
     TITLE = {Some questions concerning superderivations on {$\Bbb
              Z_2$}-graded rings},
   JOURNAL = {Aequationes Math.},
  FJOURNAL = {Aequationes Mathematicae},
    VOLUME = {91},
      YEAR = {2017},
    NUMBER = {4},
     PAGES = {725--738},
      ISSN = {0001-9054,1420-8903},
   MRCLASS = {16W50 (11R52 16S50 17A70 17B40)},
  MRNUMBER = {3671538},
MRREVIEWER = {Junbo\ Li},
       DOI = {10.1007/s00010-017-0480-0},
       URL = {https://doi.org/10.1007/s00010-017-0480-0},
}

@article {ghara2,
    AUTHOR = {Ghahramani, H. and Ghosseiri, M. N. and Heidari Zadeh, L.},
     TITLE = {On the {L}ie derivations and generalized {L}ie derivations of
              quaternion rings},
   JOURNAL = {Comm. Algebra},
  FJOURNAL = {Communications in Algebra},
    VOLUME = {47},
      YEAR = {2019},
    NUMBER = {3},
     PAGES = {1215--1221},
      ISSN = {0092-7872,1532-4125},
   MRCLASS = {16W25},
  MRNUMBER = {3938550},
MRREVIEWER = {Tang\ Rong},
       DOI = {10.1080/00927872.2018.1501577},
       URL = {https://doi.org/10.1080/00927872.2018.1501577},
}

@book {lang,
    AUTHOR = {Lang, Serge},
     TITLE = {Algebra},
    SERIES = {Graduate Texts in Mathematics},
    VOLUME = {211},
   EDITION = {third},
 PUBLISHER = {Springer-Verlag, New York},
      YEAR = {2002},
     PAGES = {xvi+914},
      ISBN = {0-387-95385-X},
   MRCLASS = {00A05 (15-02)},
  MRNUMBER = {1878556},
       DOI = {10.1007/978-1-4613-0041-0},
       URL = {https://doi.org/10.1007/978-1-4613-0041-0},
}

@article {mig1,
    AUTHOR = {Miguel, C. J. and Ser\^odio, R.},
     TITLE = {On the structure of quaternion rings over {$\Bbb Z_p$}},
   JOURNAL = {Int. J. Algebra},
  FJOURNAL = {International Journal of Algebra},
    VOLUME = {5},
      YEAR = {2011},
    NUMBER = {25-28},
     PAGES = {1313--1325},
      ISSN = {1312-8868,1314-7595},
   MRCLASS = {16K20},
  MRNUMBER = {2864596},
}

@article {mig2,
    AUTHOR = {Grau, Jos\'e{} Mar\'ia and Miguel, Celino and Oller-Marc\'en,
              Antonio M.},
     TITLE = {On the structure of quaternion rings over
              {$\Bbb{Z}/n\Bbb{Z}$}},
   JOURNAL = {Adv. Appl. Clifford Algebr.},
  FJOURNAL = {Advances in Applied Clifford Algebras},
    VOLUME = {25},
      YEAR = {2015},
    NUMBER = {4},
     PAGES = {875--887},
      ISSN = {0188-7009,1661-4909},
   MRCLASS = {11R52},
  MRNUMBER = {3413608},
MRREVIEWER = {Supriya\ Pisolkar},
       DOI = {10.1007/s00006-015-0544-y},
       URL = {https://doi.org/10.1007/s00006-015-0544-y},
}

@article {ragha,
    AUTHOR = {Raghavendran, R.},
     TITLE = {Finite associative rings},
   JOURNAL = {Compositio Math.},
  FJOURNAL = {Compositio Mathematica},
    VOLUME = {21},
      YEAR = {1969},
     PAGES = {195--229},
      ISSN = {0010-437X,1570-5846},
   MRCLASS = {16.10},
  MRNUMBER = {246905},
MRREVIEWER = {J.\ Luh},
}

@book {rotman,
    AUTHOR = {Rotman, Joseph J.},
     TITLE = {Advanced modern algebra},
 PUBLISHER = {Prentice Hall, Inc., Upper Saddle River, NJ},
      YEAR = {2002},
     PAGES = {xvi+1012+A8+B6+I14},
      ISBN = {0-13-087868-5},
   MRCLASS = {00-01 (00A05 13-01 16-01 20-01)},
  MRNUMBER = {2043445},
}

@book {wedderburn,
    AUTHOR = {Wedderburn, J. H. M.},
     TITLE = {Lectures on matrices},
 PUBLISHER = {Dover Publications, Inc., New York},
      YEAR = {1964},
     PAGES = {vii+200},
   MRCLASS = {15.00 (01.60)},
  MRNUMBER = {168568},
}

@article {xie,
    AUTHOR = {Xie, Lv-Ming and Wang, Qing-Wen},
     TITLE = {A system of matrix equations over the commutative quaternion
              ring},
   JOURNAL = {Filomat},
  FJOURNAL = {Univerzitet u Ni\v su. Prirodno-Matemati\v cki Fakultet.
              Filomat},
    VOLUME = {37},
      YEAR = {2023},
    NUMBER = {1},
     PAGES = {97--106},
      ISSN = {0354-5180,2406-0933},
   MRCLASS = {15B33 (15A24)},
  MRNUMBER = {4563073},
}

\bigskip

\end{document}